\newtheorem{thm}{Theorem}[section]
\newtheorem{lem}[thm]{Lemma}
\theoremstyle{definition}
\newtheorem{defn}[thm]{Definition}
\theoremstyle{remark}
\newtheorem{rem}[thm]{Remark}
\newcommand{\nocomma}{}
\newcommand{\tmop}[1]{\ensuremath{\operatorname{#1}}}
\newcommand{\strong}[1]{\textbf{#1}}
\providecommand{\xequal}[2][]{\mathop{=}\limits_{#1}^{#2}}
\title[Sufficient conditions for the existence of LCWs]{Sufficient conditions for the existence of limiting Carleman weights}
\def\R{\mathbb{R}}
\def\2L{\Lambda_{\tilde{\gamma}}}
\def\1L{\Lambda_{\gamma}}
\newcommand{\Lie}{\mathcal{L}}
\renewcommand{\ge}{\geqslant}
\DeclareMathOperator{\Tr}{Tr}
\newcommand{\Implies}[2]{$\text{\ref{#1}}\implies\text{\ref{#2}}$}%
\begin{document}

\author{Pablo Angulo-Ardoy}
\address{ Department of Mathematics, Universidad Aut\'onoma de Madrid}
\curraddr{}
\email{pablo.angulo@uam.es}

\author{Daniel Faraco}
\address{ Department of Mathematics, Universidad Aut\'onoma de Madrid, and ICMAT CSIC-UAM-UCM-UC3M}
\curraddr{}
\email{daniel.faraco@uam.es}

\author{Luis Guijarro}
\address{ Department of Mathematics, Universidad Aut\'onoma de Madrid, and ICMAT CSIC-UAM-UCM-UC3M}
\curraddr{}
\email{luis.guijarro@uam.es}

\thanks{The authors were supported by research grants MTM2011-22612, MTM2011-28198, MTM2014-57769-1-P and MTM2014-57769-3-P from the Ministerio de Ciencia e Innovaci\'on (MCINN), by ICMAT Severo Ochoa projects SEV-2011-0087 and  SEV-2015-0554 (MINECO), and by the ERC 301179
}

\begin{abstract}
In \cite{AFGR}, we found some necessary conditions for a Riemannian manifold to admit a local limiting Carleman weight (LCW), based upon the Cotton-York tensor in dimension $3$ and the Weyl tensor in dimension $4$.
In this paper, we find further necessary conditions for the existence of local LCWs that are often sufficient.
For a manifold of dimension $3$ or $4$, we classify the possible Cotton-York, or Weyl tensors, and provide a mechanism to find out whether the manifold admits local LCW for each type of tensor.
In particular, we show that a product of two surfaces admits a LCW if and only if at least one of the two surfaces is of revolution. This provides an example of a manifold satisfying the eigenflag condition of \cite{AFGR} but not admitting $LCW$.
\end{abstract}

\maketitle
\pagestyle{myheadings}
\markleft{P.ANGULO-ARDOY, D. FARACO AND L. GUIJARRO}

\section{Introduction}

Since the 1987 foundational paper of Sylvester and Uhlmann \cite{SylvesterUhlman87} (for more recent results  see \cite{CaroRogers,HabermanTataru13,Haberman15}), the only effective strategy to solve the Calder\'on inverse problem, 
has been based on the existence of Complex Geometric Optic solutions, CGO for short. In the Riemannian setting, it was discovered in \cite{DKSU07} that CGO solutions depend on the existence of so called limiting Carleman weights.
The existence of such functions was shown to be a problem in conformal geometry:

\begin{thm}[{\cite[Theorem 1.2]{DKSU07}}]\label{DKSU07}
If $(M,g)$ is a open manifold having a limiting Carleman weight, then some conformal multiple of the metric $g$, called $\tilde{g} \in [g]$, admits a parallel unit vector field. For simply connected manifolds, the converse is true.
\end{thm}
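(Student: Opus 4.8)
The plan is to reduce the analytic notion of a limiting Carleman weight to a pointwise geometric condition on the Hessian of $\varphi$, and then to recognize that condition as a conformal flatness statement about the foliation by level sets. Write the conjugated operator $P_\varphi=e^{\varphi/h}(-h^2\Delta_g)e^{-\varphi/h}$ and split its principal symbol into real and imaginary parts, $a(x,\xi)=|\xi|_g^2-|d\varphi|_g^2$ and $b(x,\xi)=2\langle\xi,d\varphi\rangle_g$. A limiting Carleman estimate holds precisely when the H\"ormander bracket condition $\{a,b\}=0$ is satisfied on the characteristic set $\{a=b=0\}$, together with the sign required in the limiting (non-strict) case. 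First I would compute $\{a,b\}$ in covariant form; the $x$-derivatives produce the Hessian $\nabla^2\varphi$, so that the bracket becomes a quadratic form in $\xi$ whose vanishing on the set $\{|\xi|_g=|d\varphi|_g,\ \langle\xi,d\varphi\rangle_g=0\}$ is equivalent to two geometric statements: (i) $\nabla^2\varphi(\nabla\varphi,X)=0$ for every $X\perp\nabla\varphi$; and (ii) the restriction of $\nabla^2\varphi$ to $(\nabla\varphi)^\perp$ is a pointwise multiple of $g$. Condition (i) says the integral curves of $\nabla\varphi$ are pregeodesics, while (ii) says the level hypersurfaces $\Sigma_c=\{\varphi=c\}$ are totally umbilic.

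For the forward implication I would exploit that conditions (i)--(ii) are conformally natural. Given an LCW $\varphi$ for $g$, set $N=\nabla\varphi/|\nabla\varphi|_g$; the goal is a conformal factor $\tilde g=e^{2f}g$ for which $N$ becomes $\tilde g$-parallel. The conformal transformation law for the second fundamental form shows umbilicity of the $\Sigma_c$ is conformally invariant, while the umbilicity coefficient and the normal derivative $|\nabla\varphi|_g$ transform by explicit first-order expressions in $f$. Choosing $f=F(\varphi)$ to depend only on the value of $\varphi$ (so that $df$ is parallel to $d\varphi$ and the foliation itself is unchanged), I would solve a single ODE in the variable $\varphi$ that simultaneously annihilates the umbilicity coefficient, making each $\Sigma_c$ totally geodesic, and normalizes $|\nabla_{\tilde g}\varphi|_{\tilde g}$ to be constant along the flow lines. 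Once the leaves are totally geodesic and the orthogonal trajectories are unit-speed geodesics, the field $\nabla_{\tilde g}\varphi/|\nabla_{\tilde g}\varphi|_{\tilde g}$ has vanishing $\tilde g$-covariant derivative, i.e. it is parallel.

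For the converse, assume $M$ is simply connected and that $\tilde g\in[g]$ carries a parallel unit vector field $E$. Then $E^\flat$ is a parallel, hence closed, $1$-form, and simple connectivity lets me integrate it to a global function $\varphi$ with $d\varphi=E^\flat$; thus $|\nabla_{\tilde g}\varphi|_{\tilde g}\equiv 1$ and $\nabla^2_{\tilde g}\varphi=0$, since $E$ is parallel. A function with vanishing Hessian trivially satisfies (i)--(ii), so $\varphi$ is an LCW for $\tilde g$, and because the equations (i)--(ii) are invariant under conformal scaling, $\varphi$ is also an LCW for $g$.

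The main obstacle is the forward direction, specifically checking that a single scalar factor $f=F(\varphi)$ suffices: one must verify that the umbilicity coefficient of the leaves, a priori a function on all of $M$, is in fact constant on each leaf $\Sigma_c$, so that the ODE in $\varphi$ is consistent and its solution globally well defined. I expect this to follow from condition (i) combined with a Codazzi/Frobenius argument for the umbilic foliation; controlling this compatibility, rather than the two conformal computations themselves, is where the real work lies.
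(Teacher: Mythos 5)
This theorem is quoted from \cite{DKSU07}; the paper does not reprove it, but it does expose the key mechanism later (in the proof of Lemma \ref{lem:chart product of surfaces}, via Lemmas 3.10--3.11 of \cite{SaloLN}): the correct conformal factor is $|\nabla\varphi|_g^2$. Measured against that, your forward direction has a genuine gap that starts with the symbol computation. On the characteristic set $\{|\xi|_g=|d\varphi|_g,\ \langle\xi,d\varphi\rangle_g=0\}$ the bracket is
$\{a,b\}=4\bigl(\nabla^2\varphi(\xi,\xi)+\nabla^2\varphi(\nabla\varphi,\nabla\varphi)\bigr)$,
which constrains only the restriction of $\nabla^2\varphi$ to $(\nabla\varphi)^\perp$ (forcing it to be the specific multiple $-\nabla^2\varphi(\hat N,\hat N)$ of the induced metric). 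The mixed components $\nabla^2\varphi(\nabla\varphi,X)$, $X\perp\nabla\varphi$, never enter, so your condition (i) is \emph{not} part of the LCW condition, and it genuinely fails for LCWs: start from a product metric with $\varphi=x_1$ and rescale by $e^{-2f}$ with $f$ varying along the level sets; $\varphi$ remains an LCW by conformal invariance, but $\nabla^2\varphi(\nabla\varphi,X)=X(f)\ne 0$. This error propagates: since (i) is what would force $|\nabla\varphi|_g$ to be constant on each level set, your ansatz $f=F(\varphi)$ for the conformal factor is too restrictive --- in the example just given the factor needed to recover a parallel field is $e^{2f}$, which is not a function of $\varphi$ alone. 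The ``Codazzi/Frobenius compatibility'' you flag at the end as the remaining work is precisely where the argument cannot be completed.

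The repair is shorter than your plan. First check (by the Hessian transformation law) that the bracket condition is conformally invariant, then set $\tilde g=|\nabla\varphi|_g^2\,g$ outright. Then $|\nabla_{\tilde g}\varphi|_{\tilde g}\equiv 1$, and differentiating this eikonal identity gives $\nabla^2_{\tilde g}\varphi(\nabla\varphi,\cdot)=0$ for free --- your condition (i) and the vanishing of the umbilicity coefficient are consequences of the normalization, not hypotheses. The bracket condition then forces the tangential block of the Hessian to vanish as well, so $\nabla^2_{\tilde g}\varphi=0$ and $\nabla_{\tilde g}\varphi$ is unit parallel; no ODE along the foliation and no constancy-on-leaves lemma is needed. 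Your converse direction (parallel $\Rightarrow$ closed $\Rightarrow$ exact on a simply connected manifold, vanishing Hessian, conformal invariance) is correct as stated.
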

For further developments see \cite{CaroSalo14,DKS13,DKLS14,KSU10,SaloLN,Salo13}.
To avoid the simply connected hypothesis, we will focus on existence of a \emph{local} LCW at a point $p$, which is a LCW defined on some neighborhood of $p$. 

Since this condition relates to the conformal class, the paper \cite{AFGR} studied such condition in  terms of the classical tensors of conformal geometry, i.e, the Weyl and Cotton tensors (see section $2$ in \cite{AFGR} for the basics on algebraic curvature operators and bivectors over a real vector space $V$; we will stick to the notation from that paper). In \cite{AFGR} we introduced the following notion:

\begin{defn}[\cite{AFGR}]\label{eigenflag}
 Let $W$ be a \emph{Weyl tensor} in $S^2(\Lambda^2V)$. We say that  $W$ satisfies the \emph{eigenflag condition} if and only if there is a vector $v\in V$ such that $W(v \wedge v^{\perp})\subset v \wedge v^{\perp}$.
\end{defn}

By examining the Weyl tensor of metrics with an $\R$-factor, we obtained in \cite{AFGR} the following obstruction for the existence of LCW's.

\begin{thm}[\cite{AFGR}]\label{Weyl} 
Let $(M,g)$ be a Riemannian manifold of dimension $n\ge 4$.
Assume that a metric $\tilde{g} \in [g]$ admits a parallel vector field.
Then for any $p\in M$, $W_p$ satisfies the eigenflag condition.
In particular, for any $p\in M$, $W_p\in S^2(\Lambda^2(T_p^\ast M))$ has at least $n-1$ linearly independent eigenvectors which are simple.
\end{thm}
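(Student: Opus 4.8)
The plan is to reduce everything to the curvature identities forced by a parallel vector field and then to read the eigenflag condition directly off the conformal decomposition of the curvature tensor; the concluding ``in particular'' will then follow formally from the eigenflag condition alone.

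First I would pass to the metric $\tilde g \in [g]$ carrying the parallel field, since the relevant eigen-structure of the Weyl tensor is a pointwise conformal invariant: under $\tilde g = e^{2\phi}g$ the $(0,4)$-form $W_p$ scales by $e^{2\phi(p)}$ while the induced inner product on $\Lambda^2$ scales by $e^{4\phi(p)}$, so the associated self-adjoint operator on $\Lambda^2 T_p^\ast M$ is merely multiplied by the constant $e^{-2\phi(p)}$, leaving its eigenspaces and the property of being simple unchanged. Hence it suffices to argue for $\tilde g$, where I normalise the parallel field to a unit field $e_0$ and complete it to an orthonormal frame $e_0, e_1, \dots, e_{n-1}$ at $p$, writing $e_0^\perp = \operatorname{span}(e_1, \dots, e_{n-1})$.

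The key input is that a parallel field annihilates the curvature in its direction. Since $\nabla e_0 = 0$, the Ricci identity gives $R(X,Y)e_0 = 0$ for all $X,Y$, so every curvature component carrying an $e_0$-index vanishes and, contracting, $\Ric(e_0,\cdot)=0$. I then invoke the conformal decomposition of $R$ as the sum of $W$ and the Kulkarni--Nomizu product of the Schouten tensor $P = \frac{1}{n-2}\big(\Ric - \frac{R_{\mathrm{scal}}}{2(n-1)}g\big)$ with $g$, whose components are $P_{ik}g_{jl}+P_{jl}g_{ik}-P_{il}g_{jk}-P_{jk}g_{il}$. For indices $a,b,c\in\{1,\dots,n-1\}$ each term of the Kulkarni--Nomizu piece $(P \owedge g)_{0abc}$ contains a factor $P_{0\bullet}$ or $g_{0\bullet}$, both of which vanish (the first because $\Ric(e_0,\cdot)=0$ and $g_{0\bullet}=0$, the second by orthonormality); together with $R_{0abc}=0$ this forces $W_{0abc}=0$. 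In operator terms this says precisely that $W$ maps $U := e_0 \wedge e_0^\perp$ into itself, which is the eigenflag condition with $v=e_0$.

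Finally I would obtain the ``in particular'' as a purely linear-algebraic consequence of the eigenflag condition, independent of the geometry: the subspace $U = e_0 \wedge e_0^\perp$ is an $(n-1)$-dimensional $W$-invariant subspace consisting entirely of decomposable, hence simple, bivectors $e_0 \wedge w$, and because $W$ is self-adjoint on $\Lambda^2$ its restriction $W|_U$ is self-adjoint on $U$; the spectral theorem then yields an orthonormal eigenbasis of $U$, giving $n-1$ linearly independent simple eigenvectors. I expect the delicate points to be mere bookkeeping --- confirming that the conformal rescaling genuinely fixes the eigen-structure at a point, and verifying the index vanishing in the Kulkarni--Nomizu term; the conceptual crux is the observation that the eigenflag subspace $U$ is automatically built from simple bivectors, which is exactly what turns abstract $W$-invariance into a count of simple eigenvectors.
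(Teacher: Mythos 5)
This statement is quoted from \cite{AFGR}; the present paper contains no proof of it, so there is no in-paper argument to compare against. Your proof is correct and is essentially the argument of \cite{AFGR} (which, as the introduction notes, proceeds ``by examining the Weyl tensor of metrics with an $\R$-factor''): the key steps --- conformal invariance of the eigenflag condition, $\nabla e_0=0\Rightarrow R(\cdot,\cdot)e_0=0$ and $\Ric(e_0,\cdot)=0$, reading off $W_{0abc}=0$ from the decomposition of $R$ into $W$ plus the Kulkarni--Nomizu product of the Schouten tensor with $g$, and the observation that $e_0\wedge e_0^{\perp}$ is an invariant subspace consisting of simple bivectors on which the self-adjoint operator $W$ diagonalizes --- are exactly the standard ones, with your version being marginally more direct in that it avoids invoking the local de Rham splitting.
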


In dimension 3, the obstruction was described in terms of the Cotton-York tensor $CY$.

 \begin{thm}[\cite{AFGR}]\label{Cotton}
  Let $n=3$. If a metric $\tilde{g} \in [g]$ admits a parallel vector field, then for any $p\in M$, there is a tangent vector $v\in T_pM$ such that  
  $$
  CY_p(v,v)=CY_p(w_1,w_2)=0
  $$
  for any pair of vectors $w_1,w_2 \in v^\perp$.
 \end{thm}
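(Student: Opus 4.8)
The plan is to reduce the statement to an explicit curvature computation forced by the product structure behind a parallel vector field, and then to invoke the conformal invariance of the Cotton--York tensor in dimension three.

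First I would exploit the parallel field. If $\tilde g$ admits a parallel vector field $V$, then $|V|_{\tilde g}$ is constant, so after normalizing we may assume $V$ is a parallel unit field. Its orthogonal distribution $V^\perp$ is then parallel, hence integrable with totally geodesic leaves, and by the local de Rham splitting theorem a neighborhood of $p$ is isometric to a Riemannian product $(\R \times N, dt^2 + h)$ with $V = \partial_t$ and $N$ a surface. I will take $v := V_p = \partial_t|_p$ as the vector whose existence is asserted.

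Next I would reduce the statement for $g$ to the same statement for $\tilde g$. In dimension three the Weyl tensor vanishes, so the Cotton tensor, and with it the Cotton--York tensor $CY$, transforms under $g \mapsto e^{2\phi} g$ by a nowhere-vanishing scalar factor; since orthogonality is conformally invariant, $v^\perp$ is unchanged, and the vanishing conditions $CY_p(v,v)=0$ and $CY_p|_{v^\perp}=0$ are preserved. Hence it suffices to verify them for the product metric $\tilde g$. The core is then the computation of $CY$ for $\tilde g = dt^2 + h$. In an adapted orthonormal frame $\{e_1 = \partial_t, e_2, e_3\}$ the Ricci tensor is $\operatorname{diag}(0, K, K)$ and the scalar curvature is $2K$, where $K$ is the Gauss curvature of $N$, so (with the dimension-three normalization) the Schouten tensor is $P = \tfrac{K}{2}\,\tilde g - K\, dt \otimes dt$. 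The decisive simplification is that both $\tilde g$ and the $1$-form $dt$ are parallel, so covariant differentiation lands only on $K$, and because $K$ is a function on the surface factor its gradient has no $\partial_t$-component, i.e.\ $e_1(K)=0$. Feeding this into $CY_{ij} = \epsilon_i{}^{kl}\nabla_k P_{lj}$ and collecting terms, the tensor reduces to the block coupling $v$ to $v^\perp$: in this frame one finds $CY_{11}=CY_{22}=CY_{33}=CY_{23}=0$, with only $CY_{12}$ and $CY_{13}$ (proportional to the tangential derivatives of $K$) surviving, which is exactly $CY_p(v,v)=0$ and $CY_p(w_1,w_2)=0$ for $w_1,w_2\in v^\perp$.

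The main obstacle I anticipate is the bookkeeping in this last step: the naive dualization of the Cotton tensor produces a piece that is antisymmetric in its two indices, and one must check that it cancels. It does precisely because of the structural input above, namely that the $dt \otimes dt$ part of $P$ corrects the first row and column, while $e_1(K)=0$ forces the would-be $CY_{23}$ entry to vanish; thus the symmetry of $CY$ and the claimed vanishing pattern emerge together from the single fact that $K$ is $t$-independent.
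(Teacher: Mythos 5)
The paper does not actually prove this statement—it is quoted from \cite{AFGR} as a known result—but your argument is correct and is essentially the one behind it: normalize the parallel field, apply the local de Rham splitting to reduce to $\tilde g = dt^2+h$, use the conformal covariance of the Cotton--York tensor in dimension three (which preserves both orthogonality and the vanishing pattern) to pass from $g$ to $\tilde g$, and compute directly. Your identification of the Schouten tensor as $P=\tfrac{K}{2}\,\tilde g - K\,dt\otimes dt$ together with the observation that $e_1(K)=0$ does force $CY_{11}=CY_{22}=CY_{33}=CY_{23}=0$ in the adapted orthonormal frame, leaving only the entries coupling $v=\partial_t$ to $v^\perp$, which is exactly the asserted eigenflag condition; the proof is complete.
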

Both theorems only gave necessary conditions for the existence of LCW's. The results of the current paper study whether the converse to the above results hold. 
Specifically, we provide sufficient conditions for the existence of limiting Carleman weights, and thus CGO solutions, and develop them in detail in dimensions three and four.

For 3- and 4-dimensional manifolds, the results in this paper and in \cite{AFGR} can answer whether the manifold admits a limiting Carleman weight, and identify them if they exist, except for some corner cases that may require some ad hoc  work. We show in section \ref{section: example FH} how to deal with these cases.  

The proof of our results combines a more precise analysis of the algebraic structure of the Weyl and Cotton tensor and an analysis of how distributions in $TM$ get affected
by conformal changes in the metric.  With a slight abuse of notation we define
\begin{defn}\label{defn: conformal factor}
 $D\subset TM$ is a conformal factor of a metric $g$ if it is a smooth distribution of constant rank, such that a conformal multiple of $g$ is a product metric with $D$ and $D^\perp$ tangent to the factors.
\end{defn}

We prove that for a concrete distribution $D$, the behaviour of the Lie derivatives or of the Covariant derivatives characterises conformal factors
 (see our Theorem \ref{prop: characterization of a conformal product factor} ). Thus if for a metric $(M,g)$  there are a finite number of eigenflag conditions the combination of  this result for
 distributions of rank $1$  and  theorems~\ref{Weyl} and \ref{Cotton} answers the question of the existence of LCW

In this way we can analyse all three dimensional manifolds.  
\begin{thm}\label{thm:three dimensions}
Let $(M,g)$ be a 3-dimensional manifold, and $p\in M$ a given point.
\begin{enumerate}
\item If $det\tmop{CY}_p\neq 0$, or there is a sequence of points $p_k$ converging to $p$ such that $det\tmop{CY}_{p_k}\neq 0$, there are no local LCWs at $p$
\item If there is a neighborhood $U$ of $p$ where $\tmop{CY}\neq 0$ but $\det\tmop{CY}=0$, then $U$ admits a LCW if and only if one of the two one-dimensional distributions of eigenflag directions for $\tmop{CY}$ satisfies one of the equivalent conditions in Theorem \ref{prop: characterization of a conformal product factor}
\item If there is a neighborhood $U$ of $p$ where $\tmop{CY}\vert_U\equiv 0$, then the metric is conformally flat in $U$, and it admits the same LCWs as a subset of $\R^3$.
\end{enumerate}
\end{thm}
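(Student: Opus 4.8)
The plan is to prove Theorem~\ref{thm:three dimensions} by reducing the existence of a local LCW to the three algebraic regimes of the Cotton-York tensor $\tmop{CY}_p$, governed by its rank. Since in dimension $3$ the Weyl tensor vanishes identically, the only conformal obstruction is encoded in $\tmop{CY}$, which is a symmetric trace-free $(0,2)$-tensor (after the standard identification $\Lambda^2 T_p^\ast M \cong T_pM$ in dimension $3$). Being symmetric and trace-free, $\tmop{CY}_p$ has eigenvalues summing to zero, so its possible ranks are $0$, $2$, or $3$; rank $1$ is impossible. This trichotomy is exactly what drives the three cases of the theorem.

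\begin{proof}[Proof sketch]
First I would translate the necessary condition of Theorem~\ref{Cotton} into the language of eigenvectors of $\tmop{CY}_p$. The condition that there exist $v$ with $\tmop{CY}_p(v,v) = \tmop{CY}_p(w_1,w_2)=0$ for all $w_1,w_2 \in v^\perp$ forces, by the trace-free symmetry, that $v$ be an eigenvector and that $\tmop{CY}_p$ restricted to $v^\perp$ be a multiple of the identity; equivalently $\tmop{CY}_p$ has a repeated eigenvalue, i.e.\ $\det \tmop{CY}_p = 0$. This immediately yields case~(1): if $\det \tmop{CY}_p \neq 0$, or this holds on a sequence $p_k \to p$ (so that by continuity no conformal factor can be defined near $p$), the eigenflag condition of Theorem~\ref{Cotton} fails at points arbitrarily close to $p$, and hence no conformal multiple of $g$ admits a parallel vector field on any neighborhood of $p$; by Theorem~\ref{DKSU07} there is no local LCW.

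For case~(2), on the open set $U$ where $\tmop{CY} \neq 0$ but $\det \tmop{CY} = 0$, the tensor has rank $2$, so its eigenvalues are $(\lambda, \lambda, -2\lambda)$ with $\lambda \neq 0$; thus the simple eigenspace and the $2$-dimensional eigenspace are both smoothly defined, giving two distinguished one-dimensional distributions of eigenflag directions (the line spanned by the simple eigenvector, and any line could serve in the degenerate plane, but the canonical candidates are these). The key point is that a local LCW exists iff some conformal multiple of $g$ is a product with an $\R$-factor, i.e.\ iff one of these one-dimensional distributions $D$ is a conformal factor in the sense of Definition~\ref{defn: conformal factor}. I would invoke Theorem~\ref{prop: characterization of a conformal product factor} to convert ``$D$ is a conformal factor'' into the stated Lie/covariant-derivative conditions, and then argue the converse direction: any parallel vector field of a conformal metric must span an eigenflag distribution, which in the rank-$2$ regime is forced to be one of these two lines.

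Case~(3) is the conformally flat regime: where $\tmop{CY} \equiv 0$ on $U$, the standard result that vanishing Cotton-York in dimension $3$ is equivalent to local conformal flatness gives a conformal diffeomorphism onto an open subset of $\R^3$, and since LCWs are conformally invariant objects (Theorem~\ref{DKSU07}), $U$ admits exactly the LCWs pulled back from flat space. The main obstacle I anticipate is the converse implication in case~(2): showing that the existence of a parallel-vector-field conformal metric forces one of the two \emph{specific} eigenvalue distributions to be the conformal factor, rather than merely some distribution compatible with the eigenflag condition. This requires combining the rigidity of the rank-$2$ eigenstructure with the covariant-derivative characterization of Theorem~\ref{prop: characterization of a conformal product factor} to pin down that the parallel field must align with an eigendistribution, and handling the measure-zero locus where $\tmop{CY}$ degenerates further (the ``corner cases'' alluded to in the introduction).
\end{proof}
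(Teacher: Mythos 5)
Your overall strategy (trichotomy by the rank of $\tmop{CY}$, Theorem~\ref{Cotton} for the obstruction, Theorem~\ref{prop: characterization of a conformal product factor} for sufficiency) is the same as the paper's, but the algebraic analysis of the eigenflag condition at the heart of cases (1) and (2) is wrong, and that analysis is exactly the content the paper isolates in Lemma~\ref{lem:algebraic Cotton}. The eigenflag condition does \emph{not} force $v$ to be an eigenvector with $\tmop{CY}_p|_{v^\perp}$ a multiple of the identity: it says $\tmop{CY}_p(v,v)=0$ and $\tmop{CY}_p|_{v^\perp}\equiv 0$, so in an orthonormal basis $\{v,w_1,w_2\}$ the matrix is
\[
\begin{pmatrix} 0 & a & b\\ a & 0 & 0\\ b & 0 & 0\end{pmatrix},
\]
with eigenvalues $0,\pm\sqrt{a^2+b^2}$; generically $v$ is \emph{not} an eigenvector. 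Your claimed spectrum $(\lambda,\lambda,-2\lambda)$ has determinant $-2\lambda^3\neq 0$ for $\lambda\neq 0$, so it cannot describe the locus $\det\tmop{CY}=0$: for a traceless symmetric operator on a $3$-space, $\det=0$ means the spectrum is $\{0,\mu,-\mu\}$ (a \emph{zero} eigenvalue, not a repeated one). Worse, under your reading ($v$ an eigenvector with $\tmop{CY}(v,v)=0$ and $\tmop{CY}|_{v^\perp}=\lambda\,\mathrm{Id}$), tracelessness forces $\lambda=0$ and hence $\tmop{CY}=0$, which would make case (2) vacuous.

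This error propagates into your identification of the two candidate distributions in case (2): they are not ``the simple eigenspace and a line in the degenerate plane.'' After a further rotation the matrix above becomes $\bigl(\begin{smallmatrix}0&c&0\\ c&0&0\\ 0&0&0\end{smallmatrix}\bigr)$, whose nonzero eigendirections are $(e_1\pm e_2)/\sqrt2$; the two eigenflag directions are $e_1$ and $e_2$, i.e.\ the \emph{bisectors} of the $\pm c$ eigendirections (equivalently, the normals to the two null planes of the quadratic form), not eigendirections of $\tmop{CY}$. Since case (2) consists precisely of testing these two distributions with Theorem~\ref{prop: characterization of a conformal product factor}, and the ``only if'' direction rests on Theorem~\ref{Cotton} forcing the parallel field of any conformal metric to span one of \emph{these} two lines, identifying the wrong pair of lines breaks the proof. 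The remainder of your outline --- continuity for the sequence in case (1), conformal flatness from $\tmop{CY}\equiv 0$ in case (3), and reducing sufficiency to Definition~\ref{defn: conformal factor} via Theorem~\ref{prop: characterization of a conformal product factor} --- matches the paper and is fine once the algebra is repaired; also, your worry about a further degeneration locus inside $U$ is moot, since the hypothesis of case (2) is $\tmop{CY}\neq 0$ throughout $U$.
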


The four dimensional case is contained in the following Theorem; the definition of the types of Weyl tensors is deferred to Lemma \ref{lemma: classification of algebraic Weyl tensors}.

\begin{thm}\label{thm:four dimensions}
Let $(M,g)$ be a 4-dimensional manifold and $p\in M$.
\begin{itemize}
 \item If $W$ is of type $A$ at $p$, or there is a sequence of points with Weyl tensors of type $A$ converging to $p$, there are not local LCWs at $p$.
 \item If $W$ is of type B at all points in a neighborhood of $p$, then $M$ admits a local LCW at $p$ if and only if at least one of the four $1$-dimensional distributions defined by eigenflags as in Lemma \ref{lemma: classification of algebraic Weyl tensors} satisfies the hypothesis of Theorem \ref{prop: characterization of a conformal product factor}.
 \item If $W$ is of type C at all points in a neighborhood of $p$, and the two complementary distributions of eigenflag directions satisfy the hypothesis of Theorem \ref{prop: characterization of a conformal product factor}, then $M$ is locally conformal to a product of surfaces, and it admits a local LCW if and only if at least one of the two integral factors is a surface of revolution.
 \item If $W$ is of type D at all points in a neighborhood of $p$, then it is locally conformally flat at $p$, and admits the same LCWs as a subset of $\R^4$.
\end{itemize}
\end{thm}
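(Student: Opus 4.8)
The plan is to reduce, via Theorem~\ref{DKSU07}, the local existence of an LCW at $p$ to the existence of a metric $\tilde g\in[g]$ admitting a parallel unit vector field on a neighborhood of $p$; since we work locally we may shrink to a simply connected neighborhood, so both implications of Theorem~\ref{DKSU07} are available, exactly as in the three-dimensional scheme of Theorem~\ref{thm:three dimensions}. A parallel unit field $V$ for $\tilde g$ splits $\tilde g$ locally as a Riemannian product with $\langle V\rangle$ tangent to a one-dimensional factor; conversely, if the rank-$1$ distribution $D=\langle V\rangle$ is a conformal factor in the sense of Definition~\ref{defn: conformal factor}, the one-dimensional factor carries a metric $\phi(t)^2\,dt^2$ that reparametrizes to $dt^2$, producing a parallel unit field. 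Hence a local LCW exists if and only if some rank-$1$ conformal factor exists, which by Theorem~\ref{prop: characterization of a conformal product factor} is decidable from the Lie or covariant derivatives of a candidate line field. The last structural ingredient is the pointwise conformal invariance of the Weyl tensor: by (the proof of) Theorem~\ref{Weyl}, if $V$ is parallel for $\tilde g$ then the line field $\langle V\rangle$ consists of eigenflag directions of $W$, and eigenflags are unchanged under conformal scaling. Thus every candidate conformal factor must be an eigenflag line field of $W$, and Lemma~\ref{lemma: classification of algebraic Weyl tensors} enumerates these for each type.

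For type $A$, Lemma~\ref{lemma: classification of algebraic Weyl tensors} shows $W_p$ admits no eigenflag direction, so there is no candidate line field and no LCW can exist. The sequence clause follows because an LCW on a neighborhood $U$ would force the eigenflag condition at every point of $U$; if points $p_k\to p$ carry type-$A$ Weyl tensors they eventually lie in $U$ and violate this, so no LCW exists at $p$. For type $D$, $W\equiv 0$ on a neighborhood, and in dimension four the Weyl--Schouten theorem gives local conformal flatness; since admitting an LCW depends only on $[g]$, the manifold admits exactly the LCWs of an open subset of flat $\R^4$.

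For type $B$, Lemma~\ref{lemma: classification of algebraic Weyl tensors} produces four simple eigenflag directions, hence four smooth rank-$1$ distributions on the neighborhood where the type is constant. These are the only possible locations for the parallel direction, so by the reduction above a local LCW exists if and only if one of the four line fields is a conformal factor, which is precisely the criterion of Theorem~\ref{prop: characterization of a conformal product factor}. For type $C$, the eigenflags fill out two complementary rank-$2$ distributions; under the stated hypothesis that both are conformal factors, Theorem~\ref{prop: characterization of a conformal product factor} yields a conformal splitting of $\tilde g$ as a product of two surfaces $\Sigma_1\times\Sigma_2$. Existence of a local LCW then requires a further rank-$1$ conformal factor inside one of the two-dimensional factors, and this is where the analysis concentrates: I would show that a line field in $T\Sigma_i$ can be completed to a parallel direction after a conformal change precisely when $\Sigma_i$ admits the normal form $dr^2+f(r)^2\,d\theta^2$, i.e.\ is a surface of revolution, by writing $\Sigma_i$ in isothermal coordinates and solving the first-order conditions of Theorem~\ref{prop: characterization of a conformal product factor} for the conformal factor; the orthogonal foliation becoming totally geodesic forces the rotational symmetry.

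The main obstacle is this last step in type $C$: translating the abstract conformal-factor condition into the concrete surface-of-revolution characterization, and checking that the two requirements---that both rank-$2$ distributions be conformal factors and that one surface be of revolution---are independent and jointly sharp. Handling the non-generic boundaries between types, for instance when simple eigenvalues collide so that type-$B$ line fields degenerate into type-$C$ distributions, will also need the limiting argument used for the sequence clause, together with the ad hoc corner-case treatment indicated in Section~\ref{section: example FH}.
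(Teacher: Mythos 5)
Your overall architecture coincides with the paper's: reduce via Theorem~\ref{DKSU07} (on a simply connected neighborhood) to the existence of a parallel unit field for some $\tilde g\in[g]$, use Theorem~\ref{Weyl} plus Lemma~\ref{lemma: classification of algebraic Weyl tensors} to enumerate the candidate line fields for each type, dispose of type A by absence of candidates (with the same closure argument for the sequence clause), of type D by Weyl--Schouten, and of type B by testing the four line fields against Theorem~\ref{prop: characterization of a conformal product factor}. All of that matches the paper's proof, which is itself only a few lines because it delegates the substance of the type C case to Theorem~\ref{thm: products of surfaces which admit LCW}.

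The one place where your sketch is genuinely thinner than the paper is exactly that delegated step. You propose to prove the surface-of-revolution equivalence by ``writing $\Sigma_i$ in isothermal coordinates and solving the first-order conditions of Theorem~\ref{prop: characterization of a conformal product factor} for the conformal factor,'' treating it as an intrinsic two-dimensional problem on one factor. But the conformal rescaling relevant to the LCW is a rescaling of the \emph{four}-dimensional product metric, and a priori the factor $|\nabla\varphi|^2$ depends on all four coordinates; nothing yet guarantees that the problem decouples onto $\Sigma_1$. This is precisely what the paper's Lemma~\ref{lem:chart product of surfaces} establishes: starting from the parallel field $A=\tilde\nabla\varphi$, it builds an adapted coordinate basis, computes the Weyl operator of the rescaled metric, and uses the vanishing of its off-diagonal entries (the equations $bb_{xy}=b_xb_y$, $bb_{xz}=b_xb_z$) to force $\log b=-J(x)+K(y,z)$, which is what makes the conformal factor separate and reduces the question to the normal form $e^{J(x)}(dt^2+dx^2)$ on the first factor. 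Without an argument of this kind (or some substitute showing the four-dimensional conformal factor restricts coherently to the surface), your claim that ``the orthogonal foliation becoming totally geodesic forces the rotational symmetry'' is not yet a proof of the only-if direction in the type C case; the if direction (revolution implies LCW, by rescaling by $e^{-J(x)}$ and checking $\partial_t$ is parallel via the Koszul formula) is fine as you have it. The hypothesis $W\neq 0$ is also essential here and should be invoked explicitly: it is what pins the parallel direction inside one fixed eigenflag $2$-plane and legitimizes the whole reduction.
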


To write a concrete example of a manifold with an eigenflag vector field, but without any local LCW (showing that the necessary condition from theorem~\ref{Weyl} is not sufficient)
recall that \emph{an ellipsoid} in $\R^3$ is \emph{scalene} if its three axis have different lengths.

\begin{thm}\label{thm:example ellipsoids}
Let $(M,g)=(S_1,g_1)\times (S_2,g_2)$ where $S_1$ and $S_2$ are two scalene ellipsoids with the metric induced by $\mathbb{R}^3$.
Then any open subset of $(M,g)$ satisfies the eigenflag condition but does not admit any LCW.
\end{thm}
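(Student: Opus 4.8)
The plan is to apply the four-dimensional classification of Theorem~\ref{thm:four dimensions} to the product of two scalene ellipsoids. The key point is that the product metric $g_1 \oplus g_2$ on $S_1 \times S_2$ places us in the realm of product-of-surfaces geometry, which should correspond to a Weyl tensor of \emph{type C} at every point. So my first step would be to verify that the Weyl tensor of $(M,g)$ is genuinely of type C everywhere, by computing the curvature operator of a product of surfaces. Each surface contributes its Gaussian curvature $K_i$, and the product curvature operator decomposes according to the splitting $\Lambda^2(T_pM) = \Lambda^2 T_pS_1 \oplus (T_pS_1 \wedge T_pS_2) \oplus \Lambda^2 T_pS_2$. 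Since each factor is a surface, the sectional curvatures are $K_1$, $0$ (for mixed planes), and $K_2$; one then subtracts the appropriate scalar-curvature and Ricci terms to extract the Weyl tensor and check that it has exactly the eigenvalue structure defining type C, with the two complementary eigenflag distributions being precisely $TS_1$ and $TS_2$.

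\smallskip

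Once type C is established, Theorem~\ref{thm:four dimensions} does most of the work: it says that, \emph{provided} the two complementary eigenflag distributions $TS_1$ and $TS_2$ satisfy the hypotheses of Theorem~\ref{prop: characterization of a conformal product factor} (which they do here, since $g$ is \emph{already} a product metric, so each of $TS_1$, $TS_2$ is literally a conformal factor in the sense of Definition~\ref{defn: conformal factor}), the manifold is locally conformal to a product of surfaces, and it admits a local LCW if and only if at least one of the two integral factors is a surface of revolution. So the second step is to invoke this and reduce the whole question to a purely two-dimensional statement: \emph{neither $S_1$ nor $S_2$ is a surface of revolution}. The eigenflag condition of Theorem~\ref{Weyl} is satisfied automatically because type C supplies the required eigenflag directions; this gives the first half of the conclusion immediately.

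\smallskip

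The crux, then, is the genuinely geometric claim that a scalene ellipsoid in $\R^3$ is not a surface of revolution, and more precisely that no open subset of it is isometric (or even conformal, in the relevant sense) to a surface of revolution. A surface of revolution is characterized, conformally or intrinsically, by admitting a nontrivial local one-parameter group of isometries (a Killing field), equivalently by the Gaussian curvature $K$ being constant along the level sets of some function whose gradient flow generates that symmetry. For a scalene ellipsoid, the three distinct axis lengths mean the only isometries are the discrete reflections fixing the coordinate planes; there is no continuous isometry group, so no Killing field, hence no axis of revolution. I would make this rigorous by examining the umbilic points (a scalene ellipsoid has exactly four, whereas an ellipsoid of revolution has umbilic circles or poles) or, more robustly, by showing the Gaussian curvature $K$ and its gradient $\nabla K$ are functionally independent on a dense open set, which precludes the level-set structure forced by rotational symmetry. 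This is the step I expect to be the main obstacle, because the precise statement needed is not bare intrinsic isometry but the conformal-factor condition of Theorem~\ref{prop: characterization of a conformal product factor} applied to the surface; I must confirm that the relevant condition on a surface reduces exactly to ``surface of revolution'' and that scalene ellipsoids fail it on every open subset. Once that is pinned down, combining it with the type~C dichotomy yields the theorem.
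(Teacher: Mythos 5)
Your approach matches the paper's: the paper obtains Theorem~\ref{thm:example ellipsoids} as an immediate corollary of Theorem~\ref{thm: products of surfaces which admit LCW} (equivalently, the type~C case of Theorem~\ref{thm:four dimensions}), observing that the Weyl operator of the product is nonvanishing because both Gaussian curvatures are positive, and that scalene ellipsoids are not surfaces of revolution. If anything, your plan is more careful than the paper's one-sentence derivation, since you correctly flag that the substantive point is that \emph{no open subset} of a scalene ellipsoid admits a nontrivial local Killing field (e.g.\ via functional independence of $K$ and $|\nabla K|^2$), which the paper leaves implicit.
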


The paper starts in Section \ref{section: is this distribution a conformal factor} by considering a metric that becomes a product after a conformal change.
The 3-dimensional case is examined in Section \ref{section: conformal factors in 3 dimensions}. Here
Theorem~\ref{Cotton} says that there are only a finite number of directions than can be factors of a product after conformal change; we analyze them by means of Theorem~\ref{prop: characterization of a conformal product factor}. This will provide a proof of Theorem \ref{thm:three dimensions}.

Section \ref{section: algebraic Weyl tensors} classifies the possible algebraic Weyl tensors that may arise in a $4$-dimensional manifold in terms of its eigenflag directions and eigenspaces.
This allows us to identify all the eigenflag directions.

\begin{lem}\label{lemma: classification of algebraic Weyl tensors}
 The algebraic Weyl operators $W$ in a vector space of dimension $4$ fall into one of the following types:

 \begin{description}
  \item[A] $W$ has no eigenflag directions.
  \item[B] $W$ has at least one eigenflag direction and three different eigenspaces of dimension $2$. In this case, $W$ has exactly four eigenflag directions.
  \item[C] $W$ has at least one eigenflag direction and two eigenspaces with dimensions four and two. In this case, the eigenflag directions for $W$ consist of the union of two orthogonal 2-planes.
  \item[D] $W$ is null. All directions are eigenflag.
 \end{description}
\end{lem}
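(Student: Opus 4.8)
The plan is to use the classical Hodge-star decomposition of $\Lambda^2 V$ in dimension four. The Hodge star $*$ satisfies $*^2 = \mathrm{Id}$, and its $\pm 1$-eigenspaces $\Lambda^{+}, \Lambda^{-}$ (self-dual and anti-self-dual bivectors) are each $3$-dimensional. Since an algebraic Weyl operator commutes with $*$, it splits as $W = W^{+} \oplus W^{-}$, where each $W^{\pm}$ is a trace-free symmetric endomorphism of the $3$-dimensional inner product space $\Lambda^{\pm}$. The eigenvalues of $W$ on $\Lambda^2 V$ are then the union of the three real eigenvalues of $W^{+}$ (summing to $0$) and the three of $W^{-}$, and the four types in the statement will correspond to the coincidence patterns between these two spectra.

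First I would set up the dictionary between directions and bivectors. For a unit $v$, the $3$-plane $S_v := v \wedge v^\perp \subset \Lambda^2 V$ is totally isotropic for the wedge pairing $\omega \mapsto \omega \wedge \omega$ (its elements are simple bivectors sharing the factor $v$), and since a nonzero (anti)self-dual form is never simple, $S_v \cap \Lambda^{\pm} = 0$; hence both projections $\Lambda^2 V \to \Lambda^{\pm}$ restrict to isomorphisms on $S_v$. Thus $S_v$ is the graph of a linear isometry $J_v \colon \Lambda^{+} \to \Lambda^{-}$, and as $v$ ranges over directions (i.e.\ over $\mathbb{RP}^3$, since $S_v = S_{-v}$) the maps $J_v$ sweep out exactly the orientation-reversing isometries $\Lambda^{+} \to \Lambda^{-}$, one of the two families of maximal isotropic subspaces of the split form on $\Lambda^2 V$. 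A direct computation then shows that $v$ is an eigenflag direction, i.e.\ $W(S_v) \subseteq S_v$, precisely when $J_v W^{+} = W^{-} J_v$; that is, when $J_v$ intertwines $W^{+}$ and $W^{-}$.

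The heart of the argument is the purely algebraic analysis of when a trace-free symmetric $W^{+}$ on $\Lambda^{+}$ is conjugate, through an orientation-reversing isometry, to $W^{-}$ on $\Lambda^{-}$. An intertwiner forces equal characteristic polynomials, and conversely equal spectra always admit one (any orthogonal conjugator can be corrected to reverse orientation by composing with a reflection commuting with $W^{-}$). If the spectra differ there is no eigenflag: \textbf{type A}. If they agree I split on the multiplicity pattern. Three distinct eigenvalues force $J_v$ to match the one-dimensional eigenlines up to sign, leaving $2^3$ sign choices of which exactly half are orientation-reversing: four isometries, hence four eigenflag directions, and $W$ acquires three $2$-dimensional eigenspaces by pairing matching eigenlines of $\Lambda^{\pm}$: \textbf{type B}. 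A single repeated eigenvalue (spectrum $\lambda,\lambda,-2\lambda$, $\lambda\neq 0$) lets $J_v$ be arbitrary on the common $2$-dimensional eigenplane, so the intertwiners form two circles; translating back through $v \mapsto J_v$ these give two one-parameter families filling out two orthogonal $2$-planes of directions, while $W$ has eigenspaces of dimensions $4$ and $2$: \textbf{type C}. Finally $W^{+}=W^{-}=0$, equivalently $W=0$, makes every isometry an intertwiner and every direction an eigenflag: \textbf{type D}. As these exhaust the cases, the classification follows.

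The main obstacle I anticipate is establishing the correspondence $v \mapsto J_v$ cleanly and extracting its two nontrivial geometric consequences: that in the distinct-eigenvalue case exactly four (not eight) sign patterns are realized by honest real directions, and that in the repeated-eigenvalue case the two circles of intertwiners really correspond to two \emph{orthogonal} $2$-planes of directions rather than some other one-parameter locus. Both hinge on tracking the orientation condition through the graph-of-isometry description, which is where the concrete Hodge-star computations must be carried out with care.
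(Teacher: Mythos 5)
Your proposal is correct, but it takes a genuinely different route from the paper. The paper's proof is hands-on: it assumes an eigenflag direction $v$ exists, builds an adapted orthonormal basis $e_1=v,e_2,e_3,e_4$ in which $W$ is diagonal with paired eigenvalues $\lambda_{ij}=\lambda_{kl}$ for complementary index pairs (citing the eigenvalue-pairing fact from \cite{AFGR}), and then uses the Pl\"ucker relation $ab-cd=0$ inside each eigenspace to locate the simple eigenbivectors and hence the eigenflag directions, case by case on the multiplicities of $\lambda_{12},\lambda_{13},\lambda_{14}$. You instead work basis-free through the splitting $W=W^+\oplus W^-$ on $\Lambda^\pm$ and the Klein correspondence $v\mapsto J_v$ identifying directions with one orientation class of isometries $\Lambda^+\to\Lambda^-$, turning the eigenflag condition into the intertwining relation $J_vW^+=W^-J_v$. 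What your approach buys is a positive, checkable criterion for type A (the spectra of $W^+$ and $W^-$ differ) rather than the paper's purely negative characterization, plus transparent group-theoretic counting in types B and C ($4$ orientation-reversing sign patterns; two circles in $\{\pm1\}\times O(2)$). What the paper's approach buys is elementary self-containedness and, importantly for the rest of the paper, the explicit adapted basis and explicit identification of the eigenflag loci as $\operatorname{span}(e_2,e_3)\cup\operatorname{span}(e_1,e_4)$ in type C. The two points you flag yourself are exactly where your sketch still owes a computation: that $v\mapsto J_v$ hits precisely one connected component of the isometries (so that exactly $4$ of the $8$ sign patterns in type B are realized), and that in type C the two circles of intertwiners pull back to two \emph{orthogonal} projective lines $\mathbb{P}(\operatorname{span}(e_2,e_3))$ and $\mathbb{P}(\operatorname{span}(e_1,e_4))$ rather than some other circles in $\mathbb{RP}^3$; both follow from writing $J_v$ explicitly in the bases $e_{12}\pm e_{34}$, $e_{13}\pm e_{42}$, $e_{14}\pm e_{23}$, and neither step fails, but they should be carried out rather than asserted.
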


We use this information in Section \ref{section: conformal factors in 4 dimensions};  as explained above, Theorem~\ref{prop: characterization of a conformal product factor} suffices to deal with case B.
In Section~\ref{6}, we give an interesting example where three of the four eigenflag directions are conformal factors. This is a manifold with three LCWs with orthogonal level sets, that is not conformal to a product of surfaces.

We start  Section \ref{section: conformal factors in 4 dimensions} by proving that product metrics of surfaces have Weyl operators of Types C and D; then we show that if such a product of surfaces admits a LCW and its Weyl operator is of type C (not the trivial conformally flat case), we can choose coordinates where the metric assumes a specially nice form. As a result, we are able to prove

\begin{thm}\label{thm: products of surfaces which admit LCW}

  Let $(S_ 1, g_ 1)$ and $(S_2,g_2)$ be open subsets of $\R^2$ with Riemannian metrics. Assume that the Weyl operator of the product metric does not vanish at any point.
  The following are equivalent:
  \begin{itemize}
    \item $(S_1, g_1)$ is locally isometric to a surface of revolution
    
    \item $(S_1, g_1)$ has a non-trivial Killing vector field
    
    \item $(S_1	\times S_2, g_1\times g_2)$  admits a LCW that is everywhere tangent to the first
    factor
    
  \end{itemize}
\end{thm}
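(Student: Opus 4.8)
The plan is to prove the cycle of equivalences by first disposing of the classical equivalence between the first two statements, then linking the surface-of-revolution structure to the existence of a LCW tangent to the first factor through an explicit analysis of the conformal change supplied by Theorem~\ref{DKSU07}. Throughout, the hypothesis that the Weyl operator never vanishes keeps us in case C of Lemma~\ref{lemma: classification of algebraic Weyl tensors}, so that the factors $TS_1$ and $TS_2$ are the canonical eigenflag $2$-planes; this rigidity is what makes the reduction to a surface computation legitimate.

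The equivalence of the first two bullets is local surface theory. If $(S_1,g_1)$ is a surface of revolution I would write $g_1=d\rho^2+f(\rho)^2\,d\theta^2$, whence $\partial_\theta$ is a nontrivial Killing field; conversely, given a Killing field nonvanishing at $p$, Fermi coordinates along a geodesic transverse to the flow orbits bring $g_1$ to this form. Passing to the isothermal coordinate $s=\int d\rho/f$ gives $g_1=e^{2\alpha(s)}(ds^2+d\theta^2)$ with $e^{\alpha}=f$, so the content is that $(S_1,g_1)$ is locally a surface of revolution exactly when its isothermal conformal factor depends on a single coordinate, which is the same as the existence of the Killing field $\partial_\theta$. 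For the implication that a surface of revolution produces a LCW, I would write the product as $g=e^{2\alpha(s)}(ds^2+d\theta^2)+g_2$ and rescale by $e^{-2\alpha(s)}$, obtaining $\hat g=e^{-2\alpha}g=d\theta^2+\bigl(ds^2+e^{-2\alpha(s)}g_2\bigr)$. Since $\alpha$ depends only on $s$, this $\hat g$ splits as a Riemannian product $\R_\theta\times N$, so $\partial_\theta$ is a parallel unit field; by Theorem~\ref{DKSU07}, $\theta$ is a LCW whose gradient $\partial_\theta$ is tangent to the first factor.

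The hard direction is that a LCW tangent to the first factor forces $(S_1,g_1)$ to be of revolution. Theorem~\ref{DKSU07} gives a conformal metric $\hat g=e^{2\psi}g$ with a parallel unit field $e$ tangent to $S_1$; locally $e=\nabla^{\hat g}T$ with $\operatorname{Hess}_{\hat g}T=0$, $dT\parallel d\phi$, and, since $e$ is tangent to $S_1$, the function $T$ depends on $S_1$ alone. Working where $dT\neq0$, I would use the conformal law of the Hessian
\[
\operatorname{Hess}_g T=d\psi\otimes dT+dT\otimes d\psi-\langle\nabla\psi,\nabla T\rangle_g\,g
\]
and decompose it along the product $g=g_1+g_2$. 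Because $T$ depends only on $S_1$ and the mixed Christoffel symbols of a product vanish, $\operatorname{Hess}_g T$ vanishes on the $TS_2$-block and on the mixed block. Reading the mixed block yields $\partial_u\psi\,\partial_x T=0$, so $\psi$ is independent of $S_2$; the $TS_2$-block then gives $\langle\nabla\psi,\nabla T\rangle_{g_1}=0$. Substituting back, the $S_1$-block reduces to $\operatorname{Hess}_{g_1}T=d\psi\otimes dT+dT\otimes d\psi$, which is precisely the statement $\operatorname{Hess}_{\hat g_1}T=0$ for the surface metric $\hat g_1=e^{2\psi}g_1$. Hence $\hat g_1$ carries a parallel field and is flat; in flat coordinates $\hat g_1=dT^2+d\tau^2$, the orthogonality $\langle\nabla\psi,\nabla T\rangle=0$ forces $\psi=\psi(\tau)$, so $g_1=e^{-2\psi(\tau)}(dT^2+d\tau^2)$ has a conformal factor depending on one coordinate; this is a surface of revolution with Killing field $\partial_T$, consistent with the forward construction.

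I expect the main obstacle to be the block decomposition in the third step, namely proving that the conformal factor $\psi$ descends to a function on $S_1$ alone, which is what collapses the four-dimensional problem to the two-dimensional surface equation. Two further points require care: one must restrict to the open set where $d\phi\neq0$ and argue that the revolution structure obtained there propagates, and one must invoke the nonvanishing of the Weyl operator (type~C, via Theorem~\ref{prop: characterization of a conformal product factor} and Lemma~\ref{lemma: classification of algebraic Weyl tensors}) to exclude the conformally flat degeneration in which the factors are no longer canonically distinguished and the rigidity used above fails.
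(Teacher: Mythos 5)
Your proof is correct, but the hard implication (LCW tangent to the first factor $\Rightarrow$ surface of revolution) follows a genuinely different route from the paper's. The paper funnels everything through Lemma~\ref{lem:chart product of surfaces}: starting from the parallel field $A=\tilde{\nabla}\varphi$ of the rescaled metric, it builds a commuting frame $\{A,B,\partial_y,\partial_z\}$ using the integrability of $A^\perp$ and the exponential map, writes $\tilde{g}=\mathrm{diag}(1,1,b,b)$, computes the full $6\times 6$ Weyl matrix in the associated bivector basis, and extracts the equations $bb_{xy}=b_xb_y$, $bb_{xz}=b_xb_z$ from the vanishing of the off-diagonal entries forced by the product structure; integrating these gives $b=e^{-J(x)+K(y,z)}$ and hence the normal form \eqref{metric J(x)}. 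You instead exploit the conformal transformation law of $\operatorname{Hess} T$ for the weight itself, decompose it along the blocks $TS_1\oplus TS_2$ of the product (mixed block gives $\psi$ independent of $S_2$, the $TS_2$ block gives $\langle\nabla\psi,\nabla T\rangle=0$, the $TS_1$ block gives $\operatorname{Hess}_{e^{2\psi}g_1}T=0$), and reduce to the two-dimensional statement that $e^{2\psi}g_1$ is flat with a parallel gradient field, after which the orthogonality relation pins $\psi$ to a single flat coordinate. This is shorter, avoids the explicit Weyl computation entirely, and --- worth noting --- does not actually use the nonvanishing of the Weyl operator for this implication, since tangency of the LCW to the first factor is already hypothesized in the third bullet (the paper needs $W\neq 0$ in Lemma~\ref{lem:chart product of surfaces} precisely because there the tangency of $A$ to a factor is \emph{derived} from the eigenflag analysis rather than assumed). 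What the paper's route buys is the explicit coordinate normal form for the whole product metric, which it states as a separate lemma and reuses. One cosmetic remark: your concern about restricting to the set where $d\phi\neq 0$ is vacuous here, since the parallel field of the rescaled metric is a unit field, so $dT$ never vanishes; the remaining implications (revolution $\Leftrightarrow$ Killing field, and the construction of the LCW from a revolution structure) coincide in substance with the paper's.
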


Hence Theorem \ref{thm:example ellipsoids} is a corollary of this,
as scalene ellipsoids are not surfaces of revolution and their product satisfy the condition on the Weyl operator. 

We are left with the case where  a manifold may have a Weyl tensor of type C, but not be conformal to a product of surfaces. Therefore in principle there are many candidates to be one
dimensional conformal factors. In section \ref{section: example FH}, we show that this is indeed possible, and show to deal with this situation in an specific example and explain how to proceed in general.

Finally, we would like to point out that, in principle, a similar analysis might be conducted in higher dimensions though satisfying the eigenflag condition is more rare (see  \cite[Theorem 6.1]{AFGR} for a quantitative statement in this regard), and the analysis is bound to become much more complicated.

\section{Criteria for a conformal product}\label{section: is this distribution a conformal factor}
Suppose we are given a Riemannian manifold $(M,g)$ with a vector distribution $D\subset TM$ such that both $D$ and its orthogonal complement $D^\perp$ are integrable. The main result of this section establishes criteria that will assure that $M$ is locally conformal to a product metric with $D$ and $D^\perp$ tangent to the factors. In order to state them, we will need some notation.

    The orthogonal splitting $TM=D\oplus D^\perp$ induces bundle projections 
    \[
    P_D:TM\to D, \quad P_{D^\perp}:TM\to D^\perp;
    \]
we will alternatively denote by $X^D$ and $X^\perp$ to the components of a vector field under the above splitting.  
    
    Given a metric tensor $g$ in $M$, we denote by $g^D$ and $g^{\perp}$ the restrictions of $g$ to $D$ and $D^\perp$ respectively, that is 
    \[
    g^D(X,Y):=g(P_D(X),P_D(y)), \quad g^{\perp}(X,Y):=g(P_{D^\perp}(X),P_{D^\perp}(y)).
    \]
It is clear that 
\[
g(X,Y)=g^D(X,Y)+g^{\perp}(X,Y).
\]
for any pair of vector fields $X$, $Y$.
   
\begin{lem}\label{lem:joint integrability}
Assume $D$, $D^\perp$ are integrable distributions of dimensions $d$ and $n-d$ respectively. Then for any point $p\in M$, there is a coordinate chart $(U,\phi)$ with $\phi=(x_1,\dots,x_n)$ such that 
\begin{enumerate}
\item $\phi(p)=0$, $\phi(U)=(-1,1)^n$;
\item the integral manifolds for $D$ in $U$ are given by equations 
\[
(x^{d+1},\dots, x^n)=(a_{d+1},\dots,a_n);
\]
\item the integral manifolds for $D^\perp$ in $U$ are given by equations 
\[
(x^{1},\dots, x^d)=(a_{1},\dots,a_d).
\]
\end{enumerate}
\end{lem}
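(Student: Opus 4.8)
The plan is to rectify the two transverse foliations \emph{simultaneously} by extracting, from the Frobenius theorem applied to each distribution separately, a pair of defining submersions whose components assemble into the desired chart. Since $D$ and $D^\perp$ are orthogonal complements we have $D\oplus D^\perp=TM$ at every point, so the two foliations are everywhere transverse; this transversality is exactly what makes the assembled map a local diffeomorphism.

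First I would apply the Frobenius theorem to the integrable distribution $D^\perp$ at $p$, obtaining a neighborhood $U_1$ and a submersion $f=(f^1,\dots,f^d):U_1\to\R^d$ whose fibers are the integral manifolds of $D^\perp$. Equivalently, the differentials $df^1,\dots,df^d$ are linearly independent and annihilate $D^\perp$, so they form a basis of the annihilator $(D^\perp)^\circ\subset T^\ast M$ at each point of $U_1$. Applying Frobenius to $D$ on a neighborhood $U_2$ of $p$ likewise yields a submersion $h=(h^1,\dots,h^{n-d}):U_2\to\R^{n-d}$ whose fibers are the integral manifolds of $D$, with $dh^1,\dots,dh^{n-d}$ forming a basis of the annihilator $D^\circ$.

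Next I set $\psi=(f^1,\dots,f^d,h^1,\dots,h^{n-d})$ on $U_1\cap U_2$ and verify that it is a local diffeomorphism near $p$. It suffices to check that the covectors $df^1,\dots,df^d,dh^1,\dots,dh^{n-d}$ are linearly independent at $p$, and for this I argue at the level of the annihilators: $(D^\perp)^\circ$ and $D^\circ$ have dimensions $d$ and $n-d$, while any covector in their intersection annihilates both $D$ and $D^\perp$, hence all of $D\oplus D^\perp=T_pM$, and is therefore zero. Thus $(D^\perp)^\circ\cap D^\circ=\{0\}$, and since the dimensions sum to $n$ the two bases together span $T_p^\ast M$, so $d\psi_p$ is invertible and $\psi$ restricts to a diffeomorphism onto its image on a smaller neighborhood. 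In these coordinates the fibers of $f$, namely the leaves of $D^\perp$, are the level sets of the first $d$ coordinates, and the fibers of $h$, namely the leaves of $D$, are the level sets of the last $n-d$ coordinates, which is precisely conditions (2) and (3). Finally I post-compose $\psi$ with the translation sending $\psi(p)$ to $0$, rescale each coordinate, and shrink the neighborhood so that the image becomes the open cube $(-1,1)^n$, producing the chart $\phi=(x_1,\dots,x_n)$ with $x_i=f^i$ for $i\le d$ and $x_{d+i}=h^i$ for $1\le i\le n-d$, which gives (1).

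The only point requiring care, the mild ``main obstacle'' here, is the simultaneity: individually rectifying each foliation is immediate from Frobenius, but one must confirm that the two families of first integrals are \emph{jointly} independent. This is exactly where orthogonality (transversality) of $D$ and $D^\perp$ enters, guaranteeing $(D^\perp)^\circ\cap D^\circ=\{0\}$ and hence that $\psi$ is a genuine coordinate chart rather than merely a submersion in each group of variables separately.
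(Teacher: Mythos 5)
Your proof is correct and follows essentially the same route as the paper: both apply Frobenius separately to $D$ and $D^\perp$, keep the transverse half of each resulting chart, and concatenate them into a single map that is checked to be a local diffeomorphism at $p$. Your annihilator argument simply makes explicit the invertibility of the differential that the paper asserts without detail.
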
    
\begin{proof}
Frobenius theorem gives us charts $(U_1,\phi_1)$ and $(U_2,\phi_2)$, with 
$\phi_1=(y_1,\dots,y_n)$, $\phi_2=(z_1,\dots,z_n)$, $\phi_1(p)=\phi_2(p)=0$, and
 such that the integral submanifolds for $D$ and $D^\perp$ are given respectively by the equations $(y_{d+1}, \dots, y_n)=
(b_{d+1},\dots,b_n)$ and  $(z_{1}, \dots, z_d)=
(c_{d+1},\dots,c_n)$ for constants $b_i$, $c_j$. The map 
$\phi=(z_{1}, \dots, z_d,y_{d+1}, \dots, y_n)$, defined in a neighborhood $U$ of $p$, is a local diffeomorphism at $p$ with $\phi(p)=0$, thus it defines a coordinate chart in some neighborhood of $p$. 
Taking a smaller neighborhood if needed, and with the help of a linear change of coordinates in $\R^n$, we can assume that its image is $(-1,1)^n$ as required. 
\end{proof}    
    
It is clear from the Lemma that, if $N$ and $N^\perp$ are the integral manifolds for $D$ and $D^\perp$ through $p$ in $U$, then $U$ is diffeomorphic to $N\times N^\perp$. The aim of this section is to find conditions on a metric $g$ in $U$ such that $(\phi^{-1})^*g$ is conformal to a product metric on $N\times N^\perp$. 
 In order to do this, we need to introduce the following $1$--form $\Phi$ in $M$:
\begin{equation}
\Phi (X) := \Tr_g\left(\frac{1}{d} \mathcal{L}_{X^\perp} g^D + \frac{1}{n - d} \mathcal{L}_{X^D} g^{\perp}\right)
\end{equation} 
  Although the Lie derivative of some tensor $T$, $X \rightarrow \mathcal{L}_X T$, is not tensorial
  in $X$, we have that 
\begin{multline}
\Lie_{fX^\perp}g^D(Y,Z)=f\Lie_{X^\perp}g^D(Y,Z)-g^D(Yf X^\perp, Z)-g^D(Y, Zf X^\perp)=\\ = f\Lie_{X^\perp}g^D(Y,Z)
\end{multline}
and similarly for $\Lie_{X^D}g^\perp$, so $\Phi$ is effectively a 1-form. 

\begin{defn}\label{def: umbilic distribution}
A distribution $D$ is said to be \emph{umbilical} if there exists a vector field $H\in D^\perp$, called the \emph{mean curvature vector field of $D$}, such that for $X,Y \in D$ and $Z \in D^{\perp}$ it holds that
\begin{equation}
\langle \nabla_X Y, Z \rangle =  \langle X, Y \rangle \langle Z, H
    \rangle
\end{equation}
\end{defn}
The relation of umbilicity to LCWs was already noted in \cite{DKSU07}.

\begin{rem}
If $D$ is an arbitrary smooth distribution, then we can define the second fundamental form of the distribution
$$
II(X,Y) = P_{D^\perp}(\nabla_X Y)
$$
where $P_{D^\perp}$ is the projection onto $D^\perp$ and $X$, $Y$, are vector fields tangent to $D$.
It is easy to check that the distribution is integrable if and only this form is symmetric, and in particular, an umbilical distribution is integrable.
\end{rem}

\begin{thm}\label{prop: characterization of a conformal product factor}
 Let $(M, g)$ be a Riemannian metric, and $D$, $D^\perp$  integrable distributions as above. 

 The following are equivalent:
 \begin{enumerate}[label=(\arabic*),ref=(\arabic*)]
 \item $g$ is locally conformal to the product of the metric restricted to an integral leave of $D$ and the metric restricted to an integral leave of $D^\perp$; \label{uno}
 \item the Lie derivative of $g^D$ with respect to any vector field in $D^\perp$ is a multiple of $g^D$, the Lie derivative of $g^\perp$ with respect to any vector field in $D$ is a multiple of $g^\perp$, and the 1-form $\Phi$ is closed; \label{dos}
 \item the distributions $D$ and $D^\perp$ are umbilic and, if $H_1$ and $H_2$ are the respective 
mean curvature vector fields, $H_1 + H_2$ is a {\emph{gradient vector field}} for the metric $g$. \label{tres}
 \end{enumerate}
\end{thm}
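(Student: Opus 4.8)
The plan is to fix the adapted coordinates provided by Lemma~\ref{lem:joint integrability}, in which the orthogonality $D\perp D^\perp$ forces $g$ to be block diagonal, $g=g^D+g^\perp$, with $g^D=\sum_{i,j\le d}g_{ij}\,dx^i\,dx^j$ and $g^\perp=\sum_{\alpha,\beta>d}g_{\alpha\beta}\,dx^\alpha\,dx^\beta$, each entry depending a priori on all the variables. I would prove the three conditions equivalent by separately establishing $\ref{dos}\Leftrightarrow\ref{tres}$ through a geometric dictionary between Lie derivatives and second fundamental forms, and $\ref{uno}\Leftrightarrow\ref{dos}$ by a direct computation in these coordinates.

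For $\ref{dos}\Leftrightarrow\ref{tres}$ the key identity is that for $X,Y\in D$ and $Z\in D^\perp$,
\[
(\mathcal{L}_Z g)(X,Y)=-2\,\langle \nabla_X Y, Z\rangle=-2\,\langle II(X,Y),Z\rangle,
\]
which follows from the definition of the Lie derivative together with metric compatibility and vanishing torsion of $\nabla$; integrability of $D$ is what makes the normal part of $\nabla_X Y$ symmetric. Integrability of $D^\perp$ ensures that the mixed and transverse components of $\mathcal{L}_Z g^D$ vanish, while on $D\times D$ it coincides with $\mathcal{L}_Z g$, so that ``$\mathcal{L}_Z g^D$ is a multiple of $g^D$ for every $Z\in D^\perp$'' says exactly that $\langle II(X,Y),Z\rangle$ is proportional to $\langle X,Y\rangle$, i.e.\ that $D$ is umbilic, in which case $\mathcal{L}_Z g^D=-2\langle H_1,Z\rangle\,g^D$; the symmetric statement holds for $D^\perp$ and $H_2$. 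Feeding this into the definition of $\Phi$, and using that $(\partial_\alpha)^D=0$ for $\partial_\alpha\in D^\perp$, I get $\Phi(\partial_\alpha)=\tfrac1d\Tr_g(\mathcal{L}_{\partial_\alpha}g^D)=-2\langle H_1,\partial_\alpha\rangle$ and likewise $\Phi(\partial_i)=-2\langle H_2,\partial_i\rangle$; since $H_1\in D^\perp$ and $H_2\in D$ this yields the clean identity $\Phi=-2\,(H_1+H_2)^\flat$ once the umbilicity (equivalently, the two Lie) conditions hold. Hence ``$\Phi$ is closed'' is equivalent to ``$(H_1+H_2)^\flat$ is closed'', i.e.\ to $H_1+H_2$ being locally a gradient vector field, completing $\ref{dos}\Leftrightarrow\ref{tres}$.

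For $\ref{uno}\Leftrightarrow\ref{dos}$ I would argue in coordinates. Since coordinate fields commute, the two Lie conditions read $\partial_\alpha g_{ij}=\lambda_\alpha g_{ij}$ and $\partial_i g_{\alpha\beta}=\mu_i g_{\alpha\beta}$; dividing by a fixed positive diagonal entry shows $g^D=g_{11}\,h(x^D)$ and $g^\perp=g_{nn}\,k(x^\perp)$ with $h,k$ depending only on the respective leaf variables, and one checks $\Phi=\lambda_\alpha\,dx^\alpha+\mu_i\,dx^i$. If moreover $\Phi$ is closed it is locally exact, $\Phi=df$, and then $\partial_\alpha(e^{-f}g_{ij})=e^{-f}(\lambda_\alpha-\partial_\alpha f)g_{ij}=0$ and similarly $\partial_i(e^{-f}g_{\alpha\beta})=0$, so $g=e^{f}\big(\tilde h(x^D)\oplus\tilde k(x^\perp)\big)$ is conformal to a product. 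Conversely, if $g=e^{2\phi}\big(h(x^D)\oplus k(x^\perp)\big)$, then $\mathcal{L}_Z g^D=(2\,Z\phi)\,g^D$ and $\mathcal{L}_W g^\perp=(2\,W\phi)\,g^\perp$, and $\Phi=2\,d\phi$ is exact, hence closed.

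The main obstacle I anticipate is bookkeeping rather than conceptual: one must carefully verify, using integrability of both distributions, that all cross and transverse components of the relevant Lie derivatives vanish, so that ``multiple of $g^D$'' is a genuine tensorial statement and $\Tr_g$ collapses to the computations above, and track the normalizing factors $d$ and $n-d$ so that the coefficients of $\Phi$ match $-2\langle H_i,\cdot\rangle$ exactly. The conceptual crux, which makes everything fall into place, is the recognition that the single $1$--form $\Phi$ simultaneously encodes the conformal factor (through its local potential $f$) and the combined mean curvature $-2(H_1+H_2)^\flat$.
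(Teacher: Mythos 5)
Your proof is correct, but it is organized differently from the paper's. The paper proves the four implications $(1)\Rightarrow(2)$, $(2)\Rightarrow(1)$, $(1)\Rightarrow(3)$, $(3)\Rightarrow(1)$, using condition (1) as the hub; in particular the equivalence with umbilicity is obtained by passing through the conformal transformation law of the Levi-Civita connection (its equation \eqref{connection of rescaled metric}) and, for $(3)\Rightarrow(1)$, a Koszul-type computation in the adapted chart. You instead prove $(2)\Leftrightarrow(3)$ directly from the identity $(\mathcal{L}_Z g^D)(X,Y)=-2\langle II(X,Y),Z\rangle$ for $X,Y\in D$, $Z\in D^\perp$ (valid once $D$ is integrable, which is part of the standing hypotheses), and $(1)\Leftrightarrow(2)$ by the coordinate computation $\partial_\alpha(e^{-f}g_{ij})=0$; both halves check out, including the tensoriality and vanishing of the mixed components of $\mathcal{L}_Z g^D$ that make ``multiple of $g^D$'' meaningful. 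What your route buys is the clean intermediate identity $\Phi=-2\,(H_1+H_2)^\flat$, which makes the equivalence of ``$\Phi$ closed'' and ``$H_1+H_2$ is locally a gradient'' completely transparent and avoids the rescaled-connection formula altogether; as a bonus, your normalization is the careful one (for $\tilde g=e^\alpha g$ one gets $H_1+H_2=\tfrac12\nabla\alpha$, whereas the paper's $U=-\nabla\alpha$ carries a harmless factor discrepancy that does not affect the gradient conclusion). The paper's route, on the other hand, yields along the way the explicit totally geodesic structure of the leaves for the rescaled metric, which is reused elsewhere. One small point to make explicit in a final write-up: in the converse step of $(2)\Leftrightarrow(3)$, to recover the vector $H_1$ from the family of multiples $c_Z$ you should note that $Z\mapsto c_Z$ is tensorial (as the paper observes for $\Phi$), so that $c_Z=-2\langle H_1,Z\rangle$ for a well-defined smooth section $H_1$ of $D^\perp$.
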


\begin{proof}
\Implies{uno}{dos} Let $\tilde{g} = e^{\alpha} g$ be a product
  metric of two metrics on the leaves of $D$ and $D^\perp$. Using the chart $(U,\phi)$, there are symmetric tensors in $U$ such that  $\tilde{g}$ is
  written as
  \[ \tilde{g} = \left(\begin{array}{cc}
       e^{\alpha} g_1 & \\
       & e^{\alpha} g_2
     \end{array}\right) \]
  where the first block does not depend on the coordinates of the second factor
  and vice versa. This is equivalent to
  \[ 
  \mathcal{L}_{X^\perp} (e^{\alpha} g_1) =\mathcal{L}_{X^D} (e^{\alpha} g_2) =
     0, 
     \]
  and also to
  \begin{eqnarray*}
    \mathcal{L}_{X^\perp} (g_1) & = & - X^\perp (\alpha) g_1\\
    \mathcal{L}_{X^D} (g_2) & = & - X^D (\alpha) g_2
  \end{eqnarray*}

  Thus the first part of \ref{dos} is immediate, and the second follows because 
  \[
  \Phi (X) = - X (\alpha) = -
  d \alpha (X).
  \] 
  
\Implies{dos}{uno} $D$ is integrable since, for $X, Y \in D$ and
  $Z \in D^\perp$:
  \begin{eqnarray*}
    g^\perp ([X, Y], Z) & = & -(\mathcal{L}_X g^\perp)(Y, Z)+ X (g^\perp (Y, Z))  - g^\perp(Y, [X, Z])\\
    & = & - c g^\perp (Y, Z)\\
    & = & 0
  \end{eqnarray*}
  where we are using that $\mathcal{L}_X g^\perp= c g^\perp$ for some function $c$. 
	The integrability of $D^\perp$ is proved similarly.  
  
  $U$ is simply connected, thus $\Phi$ being closed yields some function $\alpha$ such that $\Phi = - d\alpha$. For an arbitrary vector field $X$, the hypothesis in part \ref{dos} gives some function $c$ such that 
  $\Lie_{X^\perp}g^D=c g^D$; taking traces in both sides, and using that the dimension of $D$ is $d$, we get that 
  \[
  c= \frac{1}{d}\Tr_g\left(\Lie_{X^\perp}g^D\right)=\Phi(X^\perp)=- d\alpha(X^\perp)=-X^\perp(\alpha), 
  \]
  thus $\Lie_{X^\perp}g^D+X^\perp(\alpha)=0$, and a simple computation yields $\Lie_{X^\perp}(e^\alpha g^D)=0$. A similar procedure gives us $\Lie_{X^D}(e^\alpha g^\perp)=0$, so the metric $\tilde{g} =
  e^{\alpha} g$ is a product of two metrics on the leaves of $D$ and $D^\perp$.

\Implies{uno}{tres}   Let $\tilde{g} = e^{\alpha} g$ be a product of two metrics on the leaves of $D$ and $D^\perp$ as before. A simple
  formula relates the Levi-Civita connections of $g$ and $\tilde{g}$:
  \begin{equation}
    \nabla_X Y = \tilde{\nabla}_X Y + g(X, Y) U - g(X, U) Y - g(Y, U)X \label{connection of rescaled metric}
  \end{equation}
  where $U = - \nabla \alpha$. Thus for $X, Y \in D$ and $Z, W
  \in D^\perp$,
  \[ \begin{array}{lll}
       g(\nabla_X Y, Z) & = & g(X, Y)g(U, Z)
     \end{array} \]
  so the projection of $U$ onto $D^\perp$ is the mean curvature of $D$ and
  vice versa. It follows that $H_1 + H_2 = U$, which is a gradient.
  
  \Implies{tres}{uno} Suppose $H_1 + H_2 = - \nabla \alpha$, and
  define $\tilde{g} = e^{- \alpha} g$. Equation (\ref{connection of
  rescaled metric}) shows that
  \[ \begin{array}{ll}
       g(\tilde{\nabla}_X Y, Z) & =
     \end{array} 0 \]
  \[ \begin{array}{ll}
      g(\tilde{\nabla}_Z W, X) & =
     \end{array} 0 \]
  for $X, Y \in D$ and $Z, W \in D^\perp$.
  
  It follows from equation \eqref{def: umbilic distribution} that umbilic distributions
  are integrable, so by Lemma \ref{lem:joint integrability}, we can find a coordinate basis adapted simultaneously to $D$ and
  $D^\perp$, i.e. $\{ \partial_1, \ldots, \partial_d \}$ span $D$ and $\{ \partial_{d
  + 1}, \ldots, \partial_n \}$ span $D^\perp$. Then for $i \in \{ 1 \ldots d \}$,
  $j, k \in \{ d + 1 \ldots n \}$:
  \begin{eqnarray*}
    \partial_i \tilde{g}_{jk} & = & \partial_i (\tilde{g} (\partial_j,
    \partial_k))\\
    & = & \tilde{g} (\tilde{\nabla}_{\partial_i} \partial_j, \partial_k) +
    \tilde{g} (\partial_j, \tilde{\nabla}_{\partial_i} \partial_k)\\
    & = & - \tilde{g} (\tilde{\nabla}_{\partial_j} \partial_i, \partial_k) -
    \tilde{g} (\partial_j, \tilde{\nabla}_{\partial_k} \partial_i) - \tilde{g}
    ([\partial_i, \partial_j], \partial_k) - \tilde{g} ([\partial_i,
    \partial_k], \partial_j)\\
    & = & - \partial_j (\tilde{g} (\partial_i, \partial_k)) + \tilde{g}
    (\partial_i, \tilde{\nabla}_{\partial_j} \partial_k) - \partial_k
    (\tilde{g} (\partial_i, \partial_j)) + \tilde{g} (\partial_i,
    \tilde{\nabla}_{\partial_k} \partial_j)\\
    & = & 0
  \end{eqnarray*}
  so $\tilde{g}$ is a product metric.
\end{proof}
Related conditions can be found in the literature.
For example, \cite{Tojeiro} investigates the case of warped products. However we have preferred to keep our criteria as simple as possible. 

\section{Conformal factors in  dimension three}\label{section: conformal factors in 3 dimensions}

This section combines the results of the previous section with an examination of the Cotton-York tensor of a metric $g$ to describe when a 3-dimensional manifold has a LCW.
We refer the reader to \cite{AFGR} for some background on the Cotton-York tensor.
Recall that the space of algebraic Cotton-York tensors at some given point $p\in M$ coincides with the space of traceless symmetric operators in $T_pM$. 

\begin{defn}
 An \strong{eigenflag direction} of a traceless symmetric operator in a three-dimensional euclidean space $V$ is a one-dimensional vector subspace $L$ such that for any $v \in L$ and $w_1, w_2 \in L^{\perp}$, we have
\[ CY_p (v, v) = CY_p (w_1, w_2) = 0 \]
\end{defn}

Suppose we are given a metric $g$ in $M$; Theorem 1.6 in \cite{AFGR} shows that if a conformal metric $\tilde{g}$ admits a parallel vector field, the subspace $L$ that generates is an eigenflag direction of the Cotton-York tensor of $g$ at each point of $M$.

We start by classifying the possible algebraic Cotton-York tensors in terms of their eigenflag directions.

\begin{lem}\label{lem:algebraic Cotton}
  An algebraic Cotton-York tensor $\tmop{CY}$ falls into one of the following categories:
  \begin{itemize}
    \item Every direction in $V$ is eigenflag for $\tmop{CY}$; this agrees with the case when $\tmop{CY}$ is null.
    
    \item There are exactly two eigenflag directions for $\tmop{CY}$; this agrees with the case when $\tmop{CY}$ is not null and $\det (\tmop{CY}) = 0$.
    
    \item There are not eigenflag directions; this is equivalent to the case $\det (\tmop{CY}) \neq 0$.
  \end{itemize}
\end{lem}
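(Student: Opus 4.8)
The plan is to reduce everything to the observation that a unit vector $v$ is an eigenflag direction precisely when the matrix of $CY$ in a suitable orthonormal basis has a very rigid shape. First I would complete $v$ to an orthonormal basis $\{v,f_1,f_2\}$ with $f_1,f_2\in v^\perp$. By definition, the eigenflag conditions $CY(v,v)=0$ and $CY(w_1,w_2)=0$ for $w_1,w_2\in v^\perp$ say exactly that, in this basis,
\[
CY=\begin{pmatrix} 0 & a & b\\ a & 0 & 0\\ b & 0 & 0\end{pmatrix},\qquad a=CY(v,f_1),\quad b=CY(v,f_2).
\]
This matrix manifestly has determinant zero (its lower right $2\times 2$ block vanishes), and a one-line computation gives characteristic polynomial $-\mu\big(\mu^2-(a^2+b^2)\big)$, so the eigenvalues of $CY$ are $0$ and $\pm\sqrt{a^2+b^2}$. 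Since the determinant of the operator $CY$ is basis independent, the existence of a single eigenflag direction already forces $\det(CY)=0$. This is the contrapositive of the third bullet: if $\det(CY)\neq 0$ there are no eigenflag directions.

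For the remaining implications I would diagonalize $CY$ in an orthonormal eigenbasis $e_1,e_2,e_3$ with eigenvalues summing to zero. The three stated cases — $\det(CY)\neq 0$; $\det(CY)=0$ with $CY\neq 0$; and $CY=0$ — are mutually exclusive and exhaust all traceless symmetric operators, so it suffices to count the eigenflag directions in each. The first part handles $\det(CY)\neq 0$ (none), while $CY=0$ makes both defining equations vacuous, so every direction is eigenflag; consequently ``every direction eigenflag'' can only occur in this last case, and the three conclusions (none, exactly two, all) are distinct, which also yields the ``iff'' assertions. It remains to treat $CY\neq 0$ with $\det(CY)=0$, where tracelessness pins the eigenvalues to $\{0,\mu,-\mu\}$ with $\mu\neq 0$.

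In this last case the cleanest route is to rephrase the eigenflag condition through the quadratic form $Q(u):=CY(u,u)$. Because $CY$ is symmetric, the vanishing of the bilinear form $CY$ on $v^\perp$ is equivalent to $Q\big|_{v^\perp}\equiv 0$, that is, $v^\perp$ is a totally isotropic $2$-plane for $Q$. In the eigenbasis $Q(u)=\mu(y^2-z^2)$ has signature $(1,1,0)$ with radical $\mathbb{R}e_1$, and I would count its totally isotropic planes: any such plane must contain the radical and project to an isotropic line of the nondegenerate $(1,1)$ form on $V/\mathbb{R}e_1$, of which there are exactly two, $\{y=z\}$ and $\{y=-z\}$. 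Their orthogonal complements are the lines $\mathbb{R}(e_2-e_3)$ and $\mathbb{R}(e_2+e_3)$, on which $Q$ indeed vanishes, so both are genuine eigenflag directions and there are no others; this gives the middle bullet.

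I expect the main obstacle to be the bookkeeping in this degenerate case: checking that the two requirements $CY(v,v)=0$ and $CY\big|_{v^\perp}=0$ together single out precisely two lines rather than a one-parameter family. The totally isotropic plane count makes this transparent, but one must verify that the condition $CY(v,v)=0$ is automatically subsumed once $v^\perp$ is totally isotropic — because then $e_1\in v^\perp$, forcing $v\in\mathrm{span}(e_2,e_3)$ — so that no spurious directions are created or lost.
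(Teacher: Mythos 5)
Your proof is correct and follows essentially the same route as the paper's: both reduce to the observation that for a nonzero, degenerate, traceless symmetric form the isotropic cone is a union of two planes, so $v^\perp$ must coincide with one of them, yielding exactly two eigenflag directions. The only differences are cosmetic — you work in the eigenbasis (form $\mu(y^2-z^2)$, counted via totally isotropic planes through the radical) and prove the determinant criterion and the matrix shape from scratch, whereas the paper works in a basis adapted to a known eigenflag direction (form $2cxy$) and cites Lemmas 5.1 and 1.7 of \cite{AFGR} for those two facts; the two normal forms differ by a $45^{\circ}$ rotation.
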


\begin{proof}
Lemma 5.1 in \cite{AFGR} shows that $\det (\tmop{CY}) = 0$ if and only if $\tmop{CY}$ has an eigenflag direction, so assume that $L$ is an eigenflag direction for $\tmop{CY}$. If $v$ is a unit vector along $L$, and $\{ v, w_1, w_2\}$ is an orthonormal basis of $V$, the matrix of $\tmop{CY}$ is given by
\[ \left(\begin{array}{ccc}
     0 & a & b\\
     a & 0 & 0\\
     b & 0 & 0
   \end{array}\right) \]
   (see Lemma 1.7 in \cite{AFGR}).
   
A further rotation with axis $L$ changes the matrix of $\tmop{CY}$ to the form
\[ \left(\begin{array}{ccc}
     0 & c & 0\\
     c & 0 & 0\\
     0 & 0 & 0
   \end{array}\right) \]
If $\tmop{CY}$ is not zero, the null directions for this symmetric operator is the union of the planes $\{x = 0 \}$ and $\{ y = 0 \}$.
Thus only the orthogonals to those two planes can be eigenflag directions.

\end{proof}

\begin{proof}[Proof of Theorem \ref{thm:three dimensions}]
In the first case, the metric is conformally flat; in the second it follows from Lemma \ref{lem:algebraic Cotton} and Theorem \ref{prop: characterization of a conformal product factor}.
\end{proof}

\section{Classification of  algebraic Weyl tensors of $4$-manifolds}\label{section: algebraic Weyl tensors}

We would like to carry out a similar analysis for 4-dimensional manifolds to the one we did in Section \ref{section: conformal factors in 3 dimensions}; however, the extra dimension brings out a higher complexity,  even at the level of algebraic Weyl operators. So in this section, we will start by classifying such operators with respect to the size of their set of eigenflag directions.

\begin{proof}[Proof of Lemma \ref{lemma: classification of algebraic Weyl tensors}]
Let $W$ be a Weyl operator with an eigenflag direction $L = \langle v \rangle$.
The operator $W|_{\langle v \wedge v^{\perp} \rangle}$ is symmetric, and
diagonalizes in an orthonormal basis $v \wedge e_2, v \wedge e_3, v \wedge
e_4$. Define $e_1 = v$, and $e_{ij} = e_i \wedge e_j$. It follows from the
properties of the Weyl operator that $e_{23}$, $e_{24}$ and $e_{34}$ are eigenvectors  of $W$ with the
same eigenvalues as $e_{14}$, $e_{13}$, $e_{12}$ respectively (see the proof of Theorem 6.1 in \cite{AFGR}).
Thus $W$ diagonalizes as

\begin{equation}\label{diagonal Weyl operator}
W=\left(\begin{array}{cccccc}
  \lambda_{12} &  &  &  &  & \\
  & \lambda_{13} &  &  &  & \\
  &  & \lambda_{14} &  &  & \\
  &  &  & \lambda_{12} &  & \\
  &  &  &  & \lambda_{13} & \\
  &  &  &  &  & \lambda_{14}
\end{array}\right) 
\end{equation}

Recall that $W$ is traceless, so $\lambda_{12} + \lambda_{13} + \lambda_{14} = 0$.
If the three numbers $\lambda_{12}, \lambda_{13}, \lambda_{14}$ are different, the operator has exactly three eigenspaces, each of dimension $2$.
If two numbers coincide, there is one eigenspace of dimension $4$ and a second eigenspace of dimension $2$.
Finally, the remaining possibility is that the Weyl operator vanishes. This will account for the different possibilities in the statement of the Theorem, once we have related them to the eigenflags. We will do this case by case.

\textbf{Three different eigenvalues.}
Suppose $\lambda_{12}\neq\lambda_{13}\neq\lambda_{14}$; then
the eigenspace for $\lambda_{12}$ is the set of bivectors of the form $ae_{12} + be_{34}$, $a,b\in \R$.
The Plücker relations imply that such bivector is simple only when $ab = 0$.
In other words, the only simple bivectors in the eigenspace for $\lambda_{12}$ are the multiples of either $e_{12}$ or of $e_{34}$.
Changing $i,j$ we get that every simple eigenvector of $W$ is a multiple of some $e_{ij}$, and consequently each one of the $e_i$'s is an eigenflag direction.

Suppose $v$ were a vector spanning a different eigenflag direction.
Then $v\wedge v^\perp$ would be an eigenspace of $W$ consisting of simple bivectors, so there should be three orthogonal unit vectors $w_k \in v^{\perp}$ such that $v \wedge w_k$ are eigenvectors for $W$.
It follows that $v \wedge w_1 = e_{ij}$ for some $i, j$, which implies that $v, w_1 \in \tmop{span} (e_i, e_j)$, and we can assume without lost of generality that $i = 1, j = 2$.
Then $w_2 \in (v, w_1)^{\perp} = \tmop{span} (e_3, e_4)$, and $v \wedge w_2$ can only be a coordinate $2$-plane if $v = e_1$ or $v = e_2$.

\textbf{Two different eigenvalues.}
Suppose $\lambda_{12} = \lambda_{13}\neq \lambda_{14}$. Let $v = ae_2 + be_3$ be any vector
in $\tmop{span} (e_2, e_3)$. Then 
\[
v\wedge e_1=-ae_{12}- be_{13}, \quad v\wedge e_4=ae_{24} + be_{34},
\]
are eigenvectors of eigenvalue $\lambda_{12}$, while 
\[v\wedge(- be_2 + ae_3) = (ae_2 + be_3) \wedge
(- be_2 + ae_3) = (a^2 + b^2) e_{23}
\]
is an eigenbivector of eigenvalue $\lambda_{14}$. Therefore $v \wedge v^{\perp}$ is an invariant subspace,
and $v$ is an eigenflag direction. A similar argument applies to any vector in
$\tmop{span} (e_1, e_4)$ to show that it is an eigenflag direction, hence we only need to prove that there are no additional eigenflag directions. 

A general bivector in the $\lambda_{12}$-eigenspace has the form $ae_{12} +
be_{34} + ce_{13} + de_{24}$, and it is simple when
\[ ab - cd = 0 \]
This equation defines a $3$-dimensional quadric in a $4$ dimensional space,
and does not contain any linear space of dimension $3$. If $v$ is an eigenflag
direction, $v \wedge v^{\perp}$ is an invariant subspace, and the restriction
of $W$ to $v \wedge v^{\perp}$ diagonalizes in subspaces of the eigenspaces
for $W$. Thus, $v \wedge v^{\perp}$ must intersect the eigenspace associated
to $\lambda_{14}$. The intersection is spanned by a bivector $v \wedge w$, for
some $w \in v^{\perp}$, but the only simple eigenbivectors in the eigenspace
associated to $\lambda_{14}$ are $e_{14}$ and $e_{23}$. If $v \wedge w =
e_{14}$, this implies that $v \in \tmop{span} (e_1, e_4)$, while $v \wedge w =
e_{23}$ implies $v \in \tmop{span} (e_2, e_3)$.
\end{proof}

\section{Conformal factors in dimension four}\label{section: conformal factors in 4 dimensions}

A 4-dimensional manifold may be conformal to a product in two different ways: a product $\R\times N$, with $N$ a 3-dimensional manifold, or as a product of two surfaces.
Since we are going to use the Weyl tensor to distinguish between the two cases, we will start by computing the Weyl operator of a product of two surfaces.

\begin{lem}\label{lemma: Weyl operator of a product of surfaces}
 Let $M_1$ and $M_2$ be two Riemannian surfaces.
 \begin{enumerate}
\item The Weyl operator of $M_1\times M_2$ has type $C$ or $D$ at any point.
 
\item  It has type $D$ at every point if and only if both surfaces have constant curvature $s_1$ and $s_2$, and $s_1+s_2=0$.

\item If it has type $C$ at a point $p$, then the two planes of eigenflag directions of $W_p$ of Lemma \ref{lemma: classification of algebraic Weyl tensors} are the distributions tangent to the factors $M_1$ and $M_2$ at $p$.
\end{enumerate}
\end{lem}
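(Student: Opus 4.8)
The plan is to compute the Weyl operator of a product $M_1 \times M_2$ of two surfaces explicitly, using the fact that in dimension four the Weyl tensor is the trace-free part of the full curvature operator acting on $\Lambda^2$. First I would set up an orthonormal frame $e_1, e_2$ tangent to $M_1$ and $e_3, e_4$ tangent to $M_2$ at a point $p$, so that $e_{12} = e_1 \wedge e_2$, $e_{13}, e_{14}, e_{23}, e_{24}, e_{34}$ give an orthonormal basis of $\Lambda^2 T_p M$. For a product, the curvature tensor is block-diagonal: the only nonvanishing sectional curvatures come from planes tangent to a single factor, so $R(e_{12}) = s_1 \, e_{12}$ and $R(e_{34}) = s_2 \, e_{34}$, where $s_1, s_2$ are the Gauss curvatures of the two surfaces at $p$, while the mixed bivectors $e_{13}, e_{14}, e_{23}, e_{24}$ lie in the kernel of $R$. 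The essential computation is then to subtract the Ricci and scalar contributions (the Kulkarni--Nomizu terms built from $\Ric$ and the scalar curvature) to isolate $W$. Since $\Ric$ and the scalar curvature are determined by $s_1$ and $s_2$ in a product of surfaces, this reduces to linear algebra on the $6$-dimensional space $\Lambda^2 T_p M$.

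Carrying out that subtraction, I expect to find that $W$ is again diagonal in the basis $\{e_{ij}\}$, with eigenvalue on $e_{12}$ and $e_{34}$ equal to some common value proportional to $s_1 + s_2$, and eigenvalue on the four mixed bivectors $e_{13}, e_{14}, e_{23}, e_{24}$ equal to a value proportional to $-(s_1+s_2)$ (the signs and factors arranged to make $W$ traceless). The key structural point is that the mixed bivectors all share one eigenvalue while the pure bivectors $e_{12}, e_{34}$ share the complementary eigenvalue; after matching with the normal form \eqref{diagonal Weyl operator} from Lemma \ref{lemma: classification of algebraic Weyl tensors}, one sees that this is precisely the eigenvalue pattern $\lambda_{12} = \lambda_{13} \neq \lambda_{14}$ of the ``two different eigenvalues'' case, i.e.\ the pattern that the classification lemma labels type C (or type D when the operator vanishes). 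This proves part (1): a product of surfaces can only be of type C or D.

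For part (2), the operator is null exactly when all eigenvalues vanish, which by the above computation forces $s_1 + s_2 = 0$ at every point; one then argues that the Weyl tensor vanishing identically on a product forces each surface to have constant curvature. I would obtain this either by differentiating the relation $s_1(x) + s_2(y) \equiv 0$ in $x$ and in $y$ separately (which immediately gives $s_1$ and $s_2$ constant since they depend on disjoint sets of variables), or by invoking that $W \equiv 0$ means local conformal flatness, which for a Riemannian product constrains the factors to constant curvature. For part (3), when the type is C the two-dimensional eigenspace of the distinguished eigenvalue consists exactly of the simple bivectors fixed by the factor structure; reading off the eigenflag directions from the analysis in Lemma \ref{lemma: classification of algebraic Weyl tensors}, one checks that the two orthogonal $2$-planes of eigenflag directions are spanned by $\{e_1, e_2\}$ and $\{e_3, e_4\}$, which are by construction the tangent spaces to $M_1$ and $M_2$.

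The main obstacle I anticipate is bookkeeping rather than conceptual: getting the normalization of the Kulkarni--Nomizu decomposition of the curvature operator exactly right in dimension four, so that the computed eigenvalues of $W$ are genuinely trace-free and match the sign conventions used in \eqref{diagonal Weyl operator}. A secondary subtlety is the passage from the pointwise statement (valid at a single $p$) to the ``at every point'' phrasing in part (2); this is where I must be careful to use the separation-of-variables argument for $s_1 + s_2 \equiv 0$, since a pointwise vanishing of $s_1(p) + s_2(p)$ at one point is weaker than constancy. Once the eigenvalue computation is pinned down, the identification of the eigenflag planes with the factor distributions in part (3) follows directly from the ``two different eigenvalues'' analysis already established in the proof of Lemma \ref{lemma: classification of algebraic Weyl tensors}.
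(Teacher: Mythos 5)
Your proposal is correct and follows essentially the same route as the paper: a direct computation showing that the Weyl operator of the product is diagonal with eigenvalue pattern $(\lambda,-\lambda/2,-\lambda/2,-\lambda/2,-\lambda/2,\lambda)$ where $\lambda$ is proportional to $s_1+s_2$, followed by separation of variables for part (2) and a reading of the classification lemma for part (3). The only cosmetic difference is that the paper performs the computation in isothermal coordinates with explicit conformal factors $f$ and $h$ rather than via the Kulkarni--Nomizu decomposition in an abstract orthonormal frame.
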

\begin{proof}
Taking isothermal coordinates $(t, x)$ on $M_1$ and $(y, z)$ on $M_2$,
the product metric is written as

$$g = \left(\begin{array}{cccc}
  f (t, x) &  &  & \\
  & f (t, x) &  & \\
  &  & h (y, z) & \\
  &  &  & h (y, z)
\end{array}\right).$$

Denote the normalized coordinate fields as
\[
\hat{\partial_t}=
\frac{1}{\sqrt{f}}\partial_t, \quad \hat{\partial_x}=\frac{1}{\sqrt{f}}\partial_x,\quad \hat{\partial_y}=\frac{1}{\sqrt{h}}\partial_y, \quad \hat{\partial_z}=\frac{1}{\sqrt{h}}\partial_z.
\]
In the basis of $\Lambda^2T_pM$ 
\[
\hat{\partial}_t \wedge \hat{\partial}_x, 
\quad \hat{\partial}_t \wedge \hat{\partial}_y, 
\quad
\hat{\partial}_t \wedge \hat{\partial}_z,
\quad 
\hat{\partial}_y \wedge \hat{\partial}_z,
\quad 
\hat{\partial}_x \wedge \hat{\partial}_z, 
\quad
\hat{\partial}_x \wedge \hat{\partial}_y ,
\]
the Weyl operator is diagonal, with
\begin{equation}
\begin{array}{c}
     \lambda \xequal{\tmop{def}} \langle W (\hat{\partial}_t \wedge \hat{\partial}_x)
     \nocomma \nocomma, \hat{\partial}_t \wedge \hat{\partial}_x \rangle = \langle W
     (\hat{\partial}_y \wedge \hat{\partial}_z) \nocomma \nocomma, \hat{\partial}_y \wedge
     \hat{\partial}_z \rangle =\\
     \\
     \dfrac{h_y^2 + h_z^2 - hh_{yy} - hh_{zz}}{6 h^3} + \dfrac{f_t^2 + f_x^2 -
     ff_{tt} - ff_{xx}}{6 f^3}
   \end{array} 
\end{equation}
and
\begin{equation}
 \begin{array}{c}
     \langle W (\hat{\partial}_t \wedge \hat{\partial}_y) \nocomma \nocomma, \hat{\partial}_t
     \wedge \hat{\partial}_y \rangle = \langle W (\hat{\partial}_t \wedge \hat{\partial}_z)
     \nocomma \nocomma, \hat{\partial}_t \wedge \hat{\partial}_z \rangle =\\
     \\
     \langle W (\hat{\partial}_x \wedge \hat{\partial}_y) \nocomma \nocomma, \hat{\partial}_x
     \wedge \hat{\partial}_y \rangle = \langle W (\hat{\partial}_x \wedge \hat{\partial}_z)
     \nocomma \nocomma, \hat{\partial}_x \wedge \hat{\partial}_z \rangle =\\
     \\
     - \dfrac{h_y^2 + h_z^2 - hh_{yy} - hh_{zz}}{12 h^3} - \dfrac{f_t^2 + f_x^2
     - ff_{tt} - ff_{xx}}{12 f^3} = - \lambda / 2
   \end{array}
\end{equation}
Thus $W$ falls into type $C$ or $D$ depending on whether $\lambda$ is different or equal to zero. 

We recognize 
\[
\lambda(t,x,y,z) = \frac{-1}{3}\left(s_1(t,x) + s_2(y,z)\right),
\]
where $s_i$ is the Gaussian curvature of $M_i$.
Thus if $\lambda$ vanishes identically, $s_1$ and $s_2$ are global constants whose sum is $0$.

\end{proof}

We are now ready to distinguish those metric products of surfaces that admit, simultaneously, a LCW.

\begin{lem}\label{lem:chart product of surfaces}
 Let $(M,g)$ be a product of surfaces that admits a limiting Carleman weight $\varphi$. Let $p\in M$ with $W_p\neq 0$; then there are coordinates $(t,x,y,z)$ around $p$ in which $g$ is written as
\begin{equation}
 g = \left(\begin{array}{cccc}
  e^{J (x)} &  &  & \\
  & e^{J (x)} &  & \\
  &  & e^{K (y, z)} & \\
  &  &  & e^{K (y, z)}
\end{array}\right)
\end{equation}
for some functions $J=J(x)$, $K=K(y,z)$.
\end{lem}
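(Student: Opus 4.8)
The plan is to peel off the product structure factor by factor, reducing the four–dimensional \textsc{lcw} condition to a rigid pair of equations on the first surface. Since $W_p\neq0$, continuity gives $W\neq0$ on a neighborhood of $p$, so by Lemma~\ref{lemma: Weyl operator of a product of surfaces} the Weyl operator is of type $C$ there, and by part (3) of that lemma the eigenflag directions at each point $q$ are exactly $T_qM_1\cup T_qM_2$. The \textsc{lcw} $\varphi$ produces, by the theory of \cite{DKSU07}, a conformal metric $\tilde g=e^{2\rho}g$ in which $V:=\nabla_{\tilde g}\varphi=e^{-2\rho}\nabla_g\varphi$ is a parallel (hence constant–norm) vector field; its direction is an eigenflag direction by Theorem~\ref{Weyl}, so it is tangent to one of the two factors at every point. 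The two closed sets $\{V\in TM_1\}$ and $\{V\in TM_2\}$ are disjoint (their intersection would force $V\in T_qM_1\cap T_qM_2=0$) and cover a connected neighborhood, so one of them is everything; after possibly swapping the factors I may assume $V\in TM_1$ throughout. Because $d\varphi=\tilde g(V,\cdot)$ annihilates $V^\perp\supset TM_2$, the function $\varphi$ is independent of the second factor, i.e. $\varphi=\varphi(t,x)$ in any isothermal product coordinates $(t,x,y,z)$ as in Lemma~\ref{lemma: Weyl operator of a product of surfaces}.

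Next I rewrite the parallelism of $V$ as the vanishing of the conformal Hessian, $\nabla^2_{\tilde g}\varphi=0$, using the transformation rule
\[
\nabla^2_{\tilde g}\varphi=\nabla^2_g\varphi-\big(d\rho\otimes d\varphi+d\varphi\otimes d\rho\big)+\langle\nabla\rho,\nabla\varphi\rangle_g\,g.
\]
Since $g=g_1\oplus g_2$ is a metric product with each factor depending only on its own coordinates, all mixed Christoffel symbols vanish; combined with $\varphi=\varphi(t,x)$ this shows that $\nabla^2_g\varphi$ is supported on the first block, where it equals the intrinsic Hessian $\nabla^2_{g_1}\varphi$. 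Reading the vanishing of $\nabla^2_{\tilde g}\varphi$ on the second block gives $0=\langle\nabla\rho,\nabla\varphi\rangle_g\,g_2$, hence $\nabla\rho\perp\nabla\varphi$; reading it on the mixed block gives $\rho_j\varphi_i=0$ for $i$ in the first factor and $j$ in the second, and since $\nabla\varphi\neq0$ this forces $\rho$ to be independent of $(y,z)$, so $\rho=\rho(t,x)$. Thus everything descends to the surface $(M_1,g_1)$: the functions $\varphi$ and $\rho$ satisfy $\nabla^2_{g_1}\varphi=d\rho\otimes d\varphi+d\varphi\otimes d\rho$ together with $\langle\nabla\rho,\nabla\varphi\rangle_{g_1}=0$.

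It remains to solve this two–dimensional problem. Taking the $g_1$–trace of the Hessian identity and using the orthogonality gives $\Delta_{g_1}\varphi=0$, so $\varphi$ is harmonic; with a local harmonic conjugate $\psi$ the pair $(\varphi,\psi)$ are isothermal coordinates in which $g_1=e^{2\mu}(d\varphi^2+d\psi^2)$. In these coordinates $\nabla_{g_1}\varphi$ points along $\partial_\varphi$, so $\langle\nabla\rho,\nabla\varphi\rangle_{g_1}=0$ reads $\rho_\varphi=0$, and the $(\varphi,\varphi)$–entry of the Hessian identity becomes $-\mu_\varphi=2\rho_\varphi=0$. Hence $\mu=\mu(\psi)$ depends on a single coordinate; renaming $t:=\varphi$, $x:=\psi$ and writing $h=e^{K(y,z)}$ on the second factor puts $g$ in the asserted form with $J(x)=2\mu(x)$. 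The main obstacle is the middle step: it is precisely the product hypothesis, via the vanishing of the mixed and second–block components of the conformal Hessian, that converts the four–dimensional \textsc{lcw} equation into the rigid surface pair $\nabla^2_{g_1}\varphi=d\rho\otimes d\varphi+d\varphi\otimes d\rho$ and $\nabla\rho\perp\nabla\varphi$, whose only effect in the $\varphi$–adapted isothermal chart is to suppress the $\varphi$–dependence of the conformal factor.
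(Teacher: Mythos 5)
Your proof is correct, but it follows a genuinely different route from the paper's. The paper builds a new coordinate chart adapted to the parallel field $A=\tilde{\nabla}\varphi$: it shows via integrability of $A^\perp$ and the geodesic flow of $A$ that $\{A,B,\partial_y,\partial_z\}$ is a commuting frame, writes $\tilde g=\mathrm{diag}(1,1,b,b)$ in the resulting chart, and then computes the full Weyl operator there, using the vanishing of its off-diagonal entries (forced by the product structure) to obtain the PDEs $bb_{xy}=b_xb_y$, $bb_{xz}=b_xb_z$, which integrate to the separated form $\log b=-J(x)+K(y,z)$. You instead stay in the original isothermal product coordinates and encode parallelism of $\tilde\nabla\varphi$ as $\nabla^2_{\tilde g}\varphi=0$; the block decomposition of the conformal Hessian formula under the product hypothesis then yields $\partial_y\varphi=\partial_z\varphi=0$, $\nabla\rho\perp\nabla\varphi$, $\rho=\rho(t,x)$, and the two-dimensional system $\nabla^2_{g_1}\varphi=d\rho\otimes d\varphi+d\varphi\otimes d\rho$, which you solve by passing to the harmonic-conjugate chart $(\varphi,\psi)$ (legitimate since an LCW has nonvanishing gradient) and reading off $\mu_\varphi=0$. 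I checked the conformal transformation law you use and the block-by-block reading of it; all steps are sound. Your argument is more elementary and self-contained — it avoids both the Lie-bracket/geodesic construction of the adapted chart and the explicit $6\times 6$ Weyl computation — and it isolates cleanly the rigid surface equation that governs the first factor; the paper's version has the advantage of producing the explicit commuting frame and the function $b$, which mesh directly with the Weyl computations used elsewhere in Section 5. One cosmetic remark: it would be worth stating explicitly that the remaining $(\psi,\psi)$ and $(\varphi,\psi)$ entries of the surface Hessian identity are not needed for the conclusion, since the lemma only asserts a necessary normal form for $g$.
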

\begin{proof} Since $W_p\neq 0$, Lemma \ref{lemma: Weyl operator of a product of surfaces} implies that $W$ is of type $C$ in some neighborhood of $p$, and the set of eigenflag directions for the Weyl tensor of $g$ and $\tilde{g}$ is the union of the two planes $\langle \partial_t, \partial_x \rangle$ and $\langle \partial_y, \partial_z \rangle$.

Choose some coordinate system $(t,x,y,z)$ adapted to the product structure $M_1\times M_2$, as in the proof of Lemma \ref{lemma: Weyl operator of a product of surfaces}.

Let $\tilde{g}$ be the rescaled metric
\[ \tilde{g} = | \nabla \varphi |^2 g.
\]
Lemmas 3.10 and 3.11 in \cite{SaloLN} give that the vector field $A = \tilde{\nabla} \varphi$ is a parallel vector field for $\tilde{g}$. 
Theorem 1.3 in \cite{AFGR} shows that $\langle A\rangle$ must be an eigenflag direction.
 
Without loss of generality, we will assume that $A$ is in $\langle\partial_t, \partial_x\rangle$ at every point, hence there are functions $\alpha,\beta$ such that 
\[
A=\alpha\partial_t+ \beta\partial_x.
\]
 
  It follows from the definition of parallel vector field that $\Delta$, the distribution orthogonal to $A$, is integrable (see the proof of lemma 3.12 in page 57 of \cite{SaloLN}).
It is clear that 
\[
\Delta=\left<\,-\beta\partial_t+ \alpha\partial_x, \partial_y, \partial_z\,\right>.
\]
Denote $B:=-\beta\partial_t+ \alpha\partial_x$; the integrability condition implies that 
\[
\left[B,\partial_y\right]=(\partial_y \beta)\partial_t
-(\partial_y \alpha)\partial_x\in \Delta
\]
and therefore $\left[B,\partial_y\right]$ must be in the $B$ direction, i.e, $\left[B,\partial_y\right]=\lambda_1 B$ for some function $\lambda_1$; thus
\[
\partial_y \beta = -\lambda_1\cdot \beta, \quad \partial_y \alpha=-\lambda_1\cdot \alpha
\]
Looking at $\left[B,\partial_z\right]$, we get 
\[
\partial_z \beta = -\lambda_2\cdot \beta, \quad \partial_z \alpha=-\lambda_2\cdot \alpha
\]

Next, observe that any integral submanifold of $\Delta$ is foliated by surfaces tangent to $\langle \partial_y, \partial_z \rangle$; since $A$ is a parallel vector field, its integral curves are given by geodesics. Therefore, if we consider the maps
\[
\phi_{(t_0,x_0)}(u,v,w)=\exp_{(t_0,x_0,v,w)}\left(u A\right),
\]
we see that for each $(t_0,x_0)$, the vector fields $\partial_u$, $\partial_v$, $\partial_w$ are mapped by the differential of $\phi_{(t_0,x_0)}$ to $A$, $\partial_y$ and $\partial_z$, and therefore
\[
\left[A,\partial_y\right]=\left[A,\partial_z\right]=0.
\]
On the other hand, writing $A=\alpha\partial_t+ \beta\partial_x$, we would get
\[
\left[A,\partial_y\right]=-(\partial_y\alpha)\partial_t
-(\partial_y \beta)\partial_x, \quad 
\left[A,\partial_z\right]=-(\partial_z\alpha)\partial_t
-(\partial_z \beta)\partial_x.
\]
The consequence of this is that $\lambda_1=\lambda_2=0$, and every Lie bracket between elements in the basis $\{ A, B, \partial_y, \partial_z \}$ vanishes, and hence they form a coordinate basis for some set of coordinates about $p$.
We will keep on denoting them $(t,x,y,z)$, although only the last two would coincide with the former.
The first two, $(t,x)$, will still, however, parametrize the first factor of the product structure of $M$.
In this chart, $\tilde{g}$ is written as
\begin{equation}
 \tilde{g} = \left(\begin{array}{cccc}
  1 &  &  & \\
  & 1 &  & \\
  &  & b (t, x, y, z) & \\
  &  &  & b (t, x, y, z)
\end{array}\right)
\end{equation}
where $b$ is actually $h (y, z) | \nabla \varphi |^2 (t, x, y, z)$.

Because $A$ is parallel, it follows from Koszul formula
\begin{multline}
  2 g (\nabla_X Y, Z)  = \partial_X (g (Y, Z)) + \partial_Y
  (g (X, Z)) - \partial_Z (g (X, Y))\\
   + g ([X, Y], Z) - g ([X, Z], Y) - g ([Y, Z],
  X),
\end{multline}
that $b$ does not depend on $t$ (taking $X = A$, $Y = Z = \partial_y$).

The metric $\tilde{g}$ is conformal to a product where the factors are tangent to $\langle \partial_t, \partial_x \rangle> = \langle A, B \rangle$ and $\langle \partial_y, \partial_z \rangle$.
Let $\hat{\partial}_y =\frac{1}{\sqrt{b}} \partial_y$ be a unit vector in the direction of $\partial_y$.
We compute the Weyl operator in the basis 
\[
A \wedge B, \quad A \wedge \hat{\partial}_y,\quad  A \wedge \hat{\partial}_z,\quad  \hat{\partial}_y \wedge \hat{\partial}_z, \quad  B \wedge \hat{\partial}_z,\quad  B \wedge \hat{\partial}_y .
\]
The $6\times 6$ matrix of $W$ in this basis has the block structure
\[
W = \left(\begin{array}{cc}
  W_1 & 0\\
  0 & W_1
\end{array}\right)
\]
where the $3\times 3$ block $W_1$ is
\[ 
\begin{pmatrix}
     - \Psi & \frac{bb_{xy} - b_x b_y}{4 \,
     b^{5/2}} & \frac{bb_{xz} - b_x b_z}{4 \,
     b^{5/2}}\\
     \frac{bb_{xy} - b_x b_y}{4 \, b^{5/2}} &
     \Psi/2 & 0\\
     \frac{bb_{xz} - b_x b_z}{4 \, b^{5/2}} & 0 &
     \Psi/2
\end{pmatrix}
\]
where $\Psi = \frac{bb_x^2 - b^2 b_{xx} - b_y^2 + bb_{yy} - b_z^2 + bb_{zz}}{6\, b^3}$.

As mentioned in the proof of Lemma \ref{lemma: Weyl operator of a product of surfaces}, if the metric is a product of surfaces tangent to $\langle \partial_1, \partial_2 \rangle$ and $\langle \partial_3, \partial_4 \rangle$, the off-diagonal elements in the corresponding basis of $\Lambda^2(T_p M)$ must vanish at every $p$.
Thus:
\begin{eqnarray*}
  bb_{xy} & = & b_x b_y\\
  bb_{xz} & = & b_x b_z
\end{eqnarray*}
Integrating the first equation we get:
\begin{eqnarray*}
  \frac{(b_x)_y}{b_x} & = & \frac{b_y}{b}\\
  \log (b_x) & = & \log (b) + C (x, z)\\
  \frac{b_x}{b} & = & e^{C (x, z)}\\
  \log (b) & = & D (x, z) + E (y, z)
\end{eqnarray*}
and similarly, integrating the second, we get $\log (b) = F (x, y) + G (y, z)$.
Define $r = \log (b)$, and observe that $r_{xy} = r_{xz} = 0$, which in turn
yields $r_x = H (x)$, or $r (x, y, z) = -J (x) + K (y, z)$.

Thus, in the basis $\{ A, B, \partial_y, \partial_z \}$, the metric $\tilde{g}$ is written  as

\[
\tilde{g} = \left(\begin{array}{cccc}
  1 &  &  & \\
  & 1 &  & \\
  &  & e^{-J (x)} e^{K (y, z)} & \\
  &  &  & e^{-J (x)} e^{K (y, z)}
\end{array}\right)
\]
Undoing the change from $g$ to $\tilde{g}$ shows that the matrix for $g$ is as claimed in the statement of the Theorem.
\end{proof}

We can now proceed with the proof of Theorem \ref{thm: products of surfaces which admit LCW}

\begin{proof}[Proof of Theorem \ref{thm: products of surfaces which admit LCW}]
Assume $S_1\times S_2$ has a LCW tangent to the first factor; Lemma \ref{lem:chart product of surfaces} shows that there are coordinates in $S_1$ such that $g_1$
can be written in the form:
\begin{equation}\label{metric J(x)} 
g = \left(\begin{array}{cc}
  e^{J (x)} & \\
  & e^{J (x)}
\end{array}\right) 
\end{equation}
Clearly such a metric has $\partial_t$ as a Killing field.

Assume now $(S_1,g_1)$ has a Killing field. Since
in dimension two, any Killing field can be completed to a coordinate chart, we can assume that there are coordinates where $g_1$ is written as before. 
It is then clear that after a change of coordinates of the form $(t, x) \rightarrow (t, \lambda (x))$, with $\lambda' (x) = e^{J (x) / 2}$, the matrix of $g$ becomes
\[g = \left(\begin{array}{cc}
  e^{- J (x)} & \\
  & 1
\end{array}\right)\]
which is a piece of a surface of revolution.

Finally, if $(S_1,g_1)$ is a surface of revolution, take a chart as above; undoing the change of coordinates $(t, x) \rightarrow (t, \lambda (x))$ get a chart in the product where the 
 metric $g_1\times g_2$ takes the expression appearing in Lemma \ref{lem:chart product of surfaces}. Multiplying by the conformal factor $e^{-J(x)}$ shows that there is a LCW along $\partial_t$.

\end{proof}

\begin{proof}[Proof of Theorem \ref{thm:four dimensions}]
The main result of \cite{AFGR} shows that if $W_p$ has type A, there cannot be LCWs around $p$. Clearly, the same thing happens if $p$ is in the closure of the set of points with Type A Weyl operators.

If $p$ has a neighborhood where every point has a Type C Weyl operator, Lemma \ref{lemma: classification of algebraic Weyl tensors} shows that there are two orthogonal distributions $D$ and $D^\perp$; the hypothesis of the Theorem assure that there is a conformal change such that a neighborhood of $p$ is isometric to a product of surfaces; since one of them has a Killing field, there is a LCW.

The case of a Type B neighborhood is similarly done. Finally, if $W$ vanishes in a neighborhood of $p$, the manifold would be conformally flat around $p$. 
\end{proof}

\section{A metric with Weyl tensor of constant type B, three of the four eigenflag directions are conformal factors}\label{6}

Lemma \ref{lemma: classification of algebraic Weyl tensors} has an interesting consequence: if a manifold admits a LCW, then it has at least four vector fields of eigenflag directions.
In general, only one of them will really correspond to a conformal factor.
Theorem~\ref{example 3 out of 4 eigenflag are conformal factors} below shows a less usual case of a manifold with Weyl tensor of constant type B that is conformal to a product along three of the four eigenflag directions.

\begin{thm}\label{example 3 out of 4 eigenflag are conformal factors}
  Take coordinates $(t,x,y,z)$ on an open set $$U\subset\{(t,x,y,z)\in\R^4:x>0\}.$$
 Define the metric 
\begin{equation}\label{metric 3 out of 4 eigenflag are conformal factors}
g = \left(\begin{array}{cccc}
1 & 0 & 0 & 0 \\
0 & 1 & 0 & 0 \\
0 & 0 & x & 0 \\
0 & 0 & 0 & x^2
\end{array}\right)
\end{equation} 
 on $U$. Then:
 \begin{itemize}
  \item The Weyl tensor of $g$ has type B at every point.
  \item The eigenflag directions of the Weyl tensor are spanned by the coordinate vector fields.
\item The functions $t,y,z$ are LCWs.
\item The function $x$ is not a LCW.
 \end{itemize}
\end{thm}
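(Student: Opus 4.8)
The plan is to split the four assertions into a curvature computation (first two bullets), a direct rescaling argument (third bullet), and an obstruction argument via Theorem \ref{prop: characterization of a conformal product factor} (fourth bullet). Throughout I work in the coordinate frame and exploit that $g=\mathrm{diag}(1,1,x,x^2)$ depends only on $x$. First I would record the Christoffel symbols; the only nonzero ones are $\Gamma^y_{xy}=\tfrac{1}{2x}$, $\Gamma^x_{yy}=-\tfrac12$, $\Gamma^z_{xz}=\tfrac1x$, $\Gamma^x_{zz}=-x$. In particular every symbol carrying the index $t$ vanishes, so $\partial_t=\nabla t$ is a parallel unit field.

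For the first two bullets I would compute the Weyl operator in the orthonormal frame $\partial_t,\ \partial_x,\ x^{-1/2}\partial_y,\ x^{-1}\partial_z$. To see that $W$ is diagonal in the associated basis of coordinate bivectors, I would use that the reflections $t\mapsto -t$, $y\mapsto -y$, $z\mapsto -z$ are isometries; their joint action on $\Lambda^2T_pM$ assigns to the six coordinate bivectors six distinct sign triples, so the only operators commuting with all three reflections — and $W$ is one, by naturality — are diagonal in that basis. Computing the sectional curvatures gives $K_{xy}=\tfrac{1}{4x^2}$, $K_{yz}=-\tfrac{1}{2x^2}$, and $K=0$ for every plane containing $\partial_t$ as well as for the $(x,z)$-plane; from these I would assemble $\mathrm{Ric}$ and the scalar curvature and obtain the three Weyl eigenvalues $-\tfrac{5}{24x^2}$, $\tfrac{1}{24x^2}$, $\tfrac{1}{6x^2}$, each of multiplicity two. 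They are pairwise distinct for $x>0$, so by Lemma \ref{lemma: classification of algebraic Weyl tensors} the tensor is of type B and its four eigenflag directions are exactly the coordinate directions $\partial_t,\partial_x,\partial_y,\partial_z$.

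For the third bullet, for each $w\in\{t,y,z\}$ I would rescale by the conformal factor $|\nabla w|^2=g^{ww}$ and set $\tilde g=g^{ww}g$. Because $g$ depends only on $x$ and this rescaling turns $\tilde g_{ww}$ into the constant $1$ while leaving the metric independent of $w$, the same Christoffel computation as above shows that $\tilde\nabla w=\partial_w$ is parallel for $\tilde g$ (for $w=t$ one has $g^{tt}=1$, and $\partial_t$ is already parallel). By the parallel-field characterization of limiting Carleman weights (Lemmas 3.10 and 3.11 of \cite{SaloLN} together with Theorem \ref{DKSU07}), each of $t,y,z$ is a LCW.

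The fourth bullet is the main obstacle, since it is a nonexistence statement. By the first two bullets and Theorem \ref{Weyl}, any LCW on $U$ has its gradient along one of the four coordinate directions, and a LCW with gradient along $\partial_x$ would force the rank-one distribution $D=\langle\partial_x\rangle$ to be a conformal factor. I would rule this out with condition \ref{tres} of Theorem \ref{prop: characterization of a conformal product factor}: although $D$ itself is totally geodesic (hence umbilic), its complement $D^\perp=\langle\partial_t,\partial_y,\partial_z\rangle$ is not umbilic. Indeed the component along $\partial_x$ of its second fundamental form is diagonal in $\partial_t,\partial_y,\partial_z$ with entries $0,-\tfrac12,-x$, while $g^\perp$ has diagonal $1,x,x^2$; the ratios $0,-\tfrac{1}{2x},-\tfrac1x$ are not all equal, so $II$ is not a multiple of $g^\perp$. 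The mismatch is exactly the fact that the two transverse directions are warped by the different powers $x$ and $x^2$. Hence $\langle\partial_x\rangle$ is not a conformal factor, no LCW has gradient along $\partial_x$, and in particular $x$ is not a LCW.
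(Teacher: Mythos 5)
Your proposal is correct and follows essentially the same route as the paper: compute the Christoffel symbols and curvature, diagonalize the Weyl operator in the coordinate bivector frame to get three distinct eigenvalues of multiplicity two (type B, with the coordinate directions as the four eigenflags), exhibit $\partial_t$, $\partial_y$, $\partial_z$ as parallel unit fields for $g$, $\tfrac1x g$, $\tfrac1{x^2}g$, and rule out $x$ by showing $\{\partial_x\}^\perp$ is not umbilic via Theorem \ref{prop: characterization of a conformal product factor}. The only differences are cosmetic: your isometry/reflection argument for diagonality of $W$ replaces the paper's direct computation, and your Weyl eigenvalues differ from the paper's $\bigl(-\tfrac{5}{96x^2},\tfrac{1}{96x^2},\tfrac{1}{24x^2}\bigr)$ by an overall factor of $4$, which is a normalization convention for the operator on $\Lambda^2$ and does not affect the conclusion since the signs and ratios agree.
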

\begin{proof}
 The non-zero Christoffel symbols for this metric are
\[
\Gamma^x_{yy} = -1/2, \quad
\Gamma^x_{zz} = -x , \quad
\Gamma^y_{xy}=\Gamma^y_{yx} = \frac{1}{2x}, \quad
\Gamma^z_{xz}=\Gamma^z_{zx} = \frac{1}{x},
\]
the $(4,0)$ Riemann curvature tensor is
\[
\mathrm{Riem}\left(g\right) = 
\frac{1}{x} \left(\mathrm{d} x\wedge \mathrm{d} y \right)\otimes \left(\mathrm{d} x\wedge \mathrm{d} y \right) 
+ 2 x \left(\mathrm{d} y\wedge \mathrm{d} z\right)\otimes \left(\mathrm{d} y\wedge\mathrm{d} z \right),
\]
the Ricci tensor is
$$
\mathrm{Ric}\left(g\right) = 
\frac{1}{4 \, x^{2}} \mathrm{d} x\otimes \mathrm{d} x
-\frac{1}{4 \, x} \mathrm{d} y\otimes \mathrm{d} y
-\frac{1}{2} \mathrm{d} z\otimes \mathrm{d} z,
$$
and the scalar curvature is $-\frac{1}{2x^2}$.

We define the normalized vector fields $\hat{\partial}_y = \frac{\partial_y}{\sqrt{x}}$ and  $\hat{\partial}_z = \frac{\partial_z}{x}$.

In the orthonormal basis of $\Lambda^2 T_pU$
\[
\partial_t\wedge\partial_x, \quad \partial_t\wedge\hat{\partial}_y,\quad \partial_t\wedge\hat{\partial}_z,\quad \hat{\partial}_y\wedge\hat{\partial}_z, \quad\partial_x\wedge\hat{\partial}_z,\quad \partial_x\wedge\hat{\partial}_y
\]
the Weyl tensor of $g$ is diagonal, with a $3\times 3$ block repeated twice
$$
W=\left(\begin{array}{rr}
W_1 & \\
& W_1 
\end{array}\right),
\quad
W_1=\left(\begin{array}{ccc}
-\frac{5}{96 \, x^{2}} & 0 & 0 \\
0 & \frac{1}{96 \, x^{2}} & 0 \\
0 & 0 & \frac{1}{24 \, x^{2}}
\end{array}\right)
$$
The proof of Lemma \ref{lemma: classification of algebraic Weyl tensors} shows that the four eigenflag directions are spanned by $\partial_t$, $\partial_x$, $\partial_y$ and $\partial_z$.

It is obvious that $\partial_t$ is a unit parallel vector field, while $\partial_y$ and $\partial_z$ are unit parallel vector fields for the conformal metrics $\frac{1}{x} g$ and $\frac{1}{x^2} g$, respectively.
Thus, according to Theorem~\ref{DKSU07}, the coordinate functions $t$, $y$ and $z$ are LCWs.

In order to check if $\langle\partial_x\rangle$ is a conformal factor, we want to apply our Theorem~\ref{prop: characterization of a conformal product factor}.
The second fundamental form of $\{\partial_x\}^\perp$ in the orthonormal basis $\{\partial_t, \hat{\partial}_y, \hat{\partial}_z\}$ is:
$$
\left(\begin{array}{ccc}
0 & 0 & 0 \\
0 & -\frac{1}{2 \, x} & 0 \\
0 & 0 & -\frac{1}{x}
\end{array}\right)
$$
which is not a multiple of the identity.
This means that $\{\partial_x\}^\perp$ is not umbilical, and we deduce that $\langle\partial_x\rangle$ is not a conformal factor.
\end{proof}

\section{A metric not conformal to a product, but with Weyl tensor of constant type C}\label{section: example FH}

In this example we show a explicit metric whose Weyl tensor has type C at all points in an open set, but which is not conformal to a product.
After that, we show how to find out all its LCWs.

\begin{thm}
 Let $U$ be any open subset of  $\{(t,x,y,z)\in\R^4:x>0\}$.
 The metric 
\begin{equation}\label{metric with Weyl tensor of type C}
g = \left(\begin{array}{cccc}
1 & 0 & 0 & 0 \\
0 & 1 & 0 & 0 \\
0 & 0 & x^3 & 0 \\
0 & 0 & 0 & 1/x
\end{array}\right)
\end{equation} 
 on $U$ has Weyl tensor of type C at every point, but is not conformal to a product of surfaces. Moreover there are only three LCW which are the coordinate
 functions $t,y,z$. 
\end{thm}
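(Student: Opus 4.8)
The plan is to verify the three claimed assertions separately: that the Weyl tensor is of type C everywhere, that the metric is not conformal to a product of surfaces, and that the only LCWs are $t,y,z$. I would begin with a direct computation of the curvature invariants in the coordinate frame $(t,x,y,z)$. First I compute the nonzero Christoffel symbols coming from the factor $g_{yy}=x^3$, $g_{zz}=1/x$ (note $\partial_t$ is again a unit parallel field since $g_{tt}=1$ and the metric is independent of $t$), then assemble the Riemann, Ricci, and scalar curvatures, exactly as in the proof of Theorem~\ref{example 3 out of 4 eigenflag are conformal factors}. Passing to the orthonormal bivector basis $\partial_t\wedge\partial_x,\ \partial_t\wedge\hat\partial_y,\ \partial_t\wedge\hat\partial_z,\ \hat\partial_y\wedge\hat\partial_z,\ \partial_x\wedge\hat\partial_z,\ \partial_x\wedge\hat\partial_y$ with $\hat\partial_y=x^{-3/2}\partial_y$ and $\hat\partial_z=x^{1/2}\partial_z$, I would display the diagonal $6\times 6$ matrix of $W$ as a repeated $3\times 3$ block $W_1$. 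The key point to extract from this computation is that $W_1$ has a \emph{repeated} eigenvalue with the other distinct, so that by Lemma~\ref{lemma: classification of algebraic Weyl tensors} the Weyl tensor is of type C at every point, and the two eigenflag $2$-planes are $\langle\partial_t,\partial_x\rangle$ and $\langle\partial_y,\partial_z\rangle$ (the presence of the nonzero $\partial_t$ parallel field guarantees type C rather than A).

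Having identified the two complementary eigenflag distributions $D=\langle\partial_y,\partial_z\rangle$ and $D^\perp=\langle\partial_t,\partial_x\rangle$, I would next show that the metric is \emph{not} conformal to a product of surfaces by appealing to Theorem~\ref{prop: characterization of a conformal product factor}. The strategy is to test the umbilicity criterion (3) on these candidate distributions: I compute the second fundamental forms of $D$ and $D^\perp$ from the Christoffel symbols and check whether each is umbilical, i.e. a multiple of the respective induced metric. I expect at least one of them to fail to be umbilical — concretely, the second fundamental form of one factor should come out diagonal but not a scalar multiple of the identity (the same obstruction mechanism used for $\partial_x$ in the previous theorem). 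Since by Lemma~\ref{lemma: Weyl operator of a product of surfaces} a product of surfaces would have its factors tangent precisely to these two eigenflag planes, failure of umbilicity for either distribution rules out any conformal product structure, which is exactly the contrast with Theorem~\ref{thm: products of surfaces which admit LCW}.

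Finally I would determine all LCWs. For existence of the three stated weights, I exhibit explicit parallel unit vector fields for suitable conformal rescalings and invoke Theorem~\ref{DKSU07}: $\partial_t$ is already parallel for $g$, while $\partial_y$ and $\partial_z$ should be parallel for $x^{-3}g$ and $xg$ respectively (this is the direct analogue of the verification in Theorem~\ref{example 3 out of 4 eigenflag are conformal factors}), so $t,y,z$ are LCWs. For the converse — that these are the \emph{only} LCWs — I use that any LCW direction must be an eigenflag direction of $W$ (Theorem~\ref{AFGR} via Theorem~\ref{Weyl}), so it lies in the union of the two planes $\langle\partial_t,\partial_x\rangle$ and $\langle\partial_y,\partial_z\rangle$. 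The work is then to rule out every eigenflag direction other than $\partial_t,\partial_y,\partial_z$ as a conformal factor, using the equivalence in Theorem~\ref{prop: characterization of a conformal product factor}: a one-dimensional conformal factor requires its distribution and orthogonal complement to be umbilical with $H_1+H_2$ a gradient.

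I expect the main obstacle to be this last step, the exhaustive elimination of the remaining eigenflag candidates. Unlike the isolated coordinate directions, a general direction in a type C eigenflag plane varies from point to point, so one must argue over whole families of candidate rank-one distributions. The cleanest route is to reduce, as in the proof of Lemma~\ref{lem:chart product of surfaces}, to the constraint that a putative LCW in the plane $\langle\partial_t,\partial_x\rangle$ forces a rigid coordinate normal form $e^{J(x)}$ on that factor; one then checks that the present metric's first factor ($g_{tt}=g_{xx}=1$, already flat) combined with the curvature data of the second factor is incompatible with any weight other than $\partial_t$, and symmetrically that a weight inside $\langle\partial_y,\partial_z\rangle$ must be a combination that degenerates to $\partial_y$ or $\partial_z$. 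Assembling these exclusions yields that $t,y,z$ are exactly the LCWs, completing the proof.
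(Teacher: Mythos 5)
Your treatment of the first two assertions and of the existence of the three LCWs matches the paper's proof: the same curvature and Weyl computations, the identification of the two eigenflag planes $\langle\partial_t,\partial_x\rangle$ and $\langle\partial_y,\partial_z\rangle$, the use of Lemma \ref{lemma: Weyl operator of a product of surfaces} to pin down the only possible product factors, the failure of umbilicity of $\langle\partial_y,\partial_z\rangle$ via Theorem \ref{prop: characterization of a conformal product factor} to exclude a conformal product structure, and the exhibition of parallel fields for $g$, $x^{-3}g$ and $xg$ to produce the weights $t$, $y$, $z$.

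The gap is in your final step, the exhaustive elimination of the remaining eigenflag directions. You propose to reduce to the normal form $e^{J(x)}$ ``as in the proof of Lemma \ref{lem:chart product of surfaces}'', but that lemma is stated and proved for a metric that \emph{is} a product of surfaces --- which is precisely what you have just shown this metric is not --- so the reduction is unavailable and you are left without a concrete mechanism for handling the one-parameter families of candidate directions. The paper closes this by applying the umbilicity criterion of Theorem \ref{prop: characterization of a conformal product factor} directly to the parametrized families. For $X=\cos(\alpha)\partial_t+\sin(\alpha)\partial_x$, the inclusion $\langle\partial_y,\partial_z\rangle\subset\langle X\rangle^{\perp}$ forces the form $Z\mapsto g(\nabla_Z Z,X)$ on that plane to be a multiple of the identity; its $\partial_t$-component vanishes identically while its $\partial_x$-component has the distinct eigenvalues $-3/(2x)$ and $1/(2x)$, so only $\sin\alpha=0$ survives. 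For $Z=\cos(\alpha)\hat{\partial}_y+\sin(\alpha)\hat{\partial}_z$ one computes the full second fundamental form of $Z^{\perp}$ and finds an entry $-2\cos(\alpha)\sin(\alpha)$ that must vanish, leaving only $\hat{\partial}_y$ and $\hat{\partial}_z$. You need an argument of this kind to make the exhaustion rigorous. A minor point: the eigenflag necessity you invoke is Theorem \ref{Weyl} (via Theorem \ref{DKSU07}), not a theorem labelled AFGR.
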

\begin{proof}
The non-zero Christoffel symbols for this metric are
$$
\Gamma^x_{yy} = -\frac{3}{2}x^2 , \quad
\Gamma^x_{zz} = \frac{1}{2\,x^2},  \quad
\Gamma^y_{xy}=\Gamma^y_{yx} = \frac{3}{2\,x},\quad
\Gamma^z_{xz}=\Gamma^z_{zx} = -\frac{1}{2\,x},
$$
the $(4,0)$-Riemann curvature tensor is
\begin{multline}
\mathrm{Riem}\left(g\right) =
- 3 x \left(\mathrm{d} x\wedge \mathrm{d} y \right)\otimes \left(\mathrm{d} x\wedge \mathrm{d} y \right) \\
- \frac{3}{x^3} \left(\mathrm{d} x\wedge \mathrm{d} z\right)\otimes \left(\mathrm{d} x\wedge\mathrm{d} z \right) 
-3 \left(\mathrm{d} y\wedge \mathrm{d} z\right)\otimes \left(\mathrm{d} y\wedge \mathrm{d} z \right),
\end{multline}
the Ricci tensor is
$$
\mathrm{Ric}\left(g\right) = -\frac{3}{2 \, x^{2}} \mathrm{d} x\otimes \mathrm{d} x,
$$
and the scalar curvature is $-\frac{3}{2x^2}$.

We define the normalized vector fields $\hat{\partial}_y = x^{-3/2}\partial_y$ and  $\hat{\partial}_z = \sqrt{x}\partial_z$.

In the orthonormal basis of $\Lambda^2 T_pU$
\[
\partial_t\wedge\partial_x, \quad \partial_t\wedge\hat{\partial}_y,  \quad \partial_t\wedge\hat{\partial}_z, \quad  \hat{\partial}_y\wedge\hat{\partial}_z, \quad  \partial_x\wedge\hat{\partial}_z, \quad  \partial_x\wedge\hat{\partial}_y
\]
the Weyl tensor of $g$ is diagonal, with a $3\times 3$ block repeated twice
$$
W=\left(\begin{array}{rr}
W_1 & \\
& W_1
\end{array}\right)
\quad
W_1=\left(\begin{array}{rrr}
\frac{1}{8 \, x^{2}} & & \\
 & -\frac{1}{16 \, x^{2}} & \\
 & & -\frac{1}{16 \, x^{2}}
\end{array}\right)
$$

The proof of \ref{lemma: classification of algebraic Weyl tensors} shows that the two $2$-planes distributions of eigenflag vectors are spanned by $\{\partial_t, \partial_x\}$ and $\{\partial_y, \partial_z\}$.
We deduce from Lemma \ref{lemma: Weyl operator of a product of surfaces} that our manifold can only be conformal to a product of two dimensional manifolds tangent to the planes $\langle\partial_t, \partial_x\rangle$ and $\langle\partial_y, \partial_z\rangle$.

However, Theorem~\ref{prop: characterization of a conformal product factor} implies that if our metric were conformal to a product metric, the distribution $\langle\partial_y, \partial_z\rangle$ would be umbilical.
But in that case, it follows from Definition \ref{def: umbilic distribution} that the following two numbers should be the same
\begin{equation}\label{first factor in example FH is not umbilical}
\begin{array}{rcl}
g(\nabla_{\hat{\partial}_y}\hat{\partial}_y,\partial_x) &=& -\frac{3}{2x},\\
g(\nabla_{\hat{\partial}_z}\hat{\partial}_z,\partial_x) &=& \frac{1}{2x} ,
\end{array}
\end{equation}
and we conclude that the metric is not conformal to a product metric.

Thus, the metric in the above Theorem is not covered by our Theorem~\ref{thm:four dimensions}.

It is obvious that $\partial_t$ is a parallel vector field, while $\partial_y$ and $\partial_z$ are unit parallel vector fields for the conformal metrics $\frac{1}{x^3} g$ and $x g$, respectively.
Thus, according to Theorem~\ref{DKSU07}, the coordinate functions $t$, $y$ and $z$ are LCWs.

In order to check if $\langle\partial_x\rangle$ is a conformal factor, we want to apply our Theorem~\ref{prop: characterization of a conformal product factor}.
The second fundamental form of $\{\partial_x\}^\perp$ in the orthonormal basis $\{\partial_t, \hat{\partial}_y, \hat{\partial}_z\}$ is:
$$
\left(\begin{array}{rrr}
0 & 0 & 0 \\
0 & -\frac{3}{2 \, x} & 0 \\
0 & 0 & \frac{1}{2 \, x}
\end{array}\right)
$$
which is not a multiple of the identity.
This means that $\{\partial_x\}^\perp$ is not umbilical, and we deduce that $\langle\partial_x\rangle$ is not a conformal factor.

The analysis of the Weyl tensor shows that any possible conformal factor must be contained in either $\langle\partial_t, \partial_x\rangle$ or $\langle\partial_y, \partial_z\rangle$.
Let us push Theorem~\ref{prop: characterization of a conformal product factor} a little bit further to find all of them. 

Let $X$ be a unit vector field in $\langle\partial_t, \partial_x\rangle$.
$X$ can be written in the form
$$
X = \cos(\alpha)\partial_t + \sin(\alpha)\partial_x
$$
for a real valued function $\alpha:M\rightarrow \R$.

If $X$ spans a conformal factor for some $\alpha$, then $\langle X\rangle^\perp$ is an umbilical distribution.
$X\in\langle\partial_t, \partial_x\rangle$ implies $\langle\partial_y, \partial_z\rangle\subset\langle X\rangle^\perp$, so in particular the form
$$
Z\rightarrow g(\nabla_Z Z, X)
$$
for $Z\in \langle\partial_y, \partial_z\rangle$ must be a multiple of the identity.
This tensor can be written as a linear combination
$$
g(\nabla_Z Z, X) = \cos(\alpha) g(\nabla_Z Z, \partial_t) + \sin(\alpha) g(\nabla_Z Z, \partial_x)
$$
but we notice that the first summand is zero
$$
Z\rightarrow g(\nabla_Z Z, \partial_t) = 0
$$
and we saw in (\ref{first factor in example FH is not umbilical}) that the matrix of $Z\rightarrow g(\nabla_Z Z, \partial_x)$ in the orthonormal basis $\{\hat{\partial}_y,\hat{\partial}_z\} $ is not a multiple of the identity.
Thus, the only combination of them that produces a multiple of the identity is $\cos(\alpha)=1, \sin(\alpha)=0$.

The same trick will not help us decide whether there are conformal factors of dimension $1$ contained in $\langle\partial_y, \partial_z\rangle$.
Instead, we define
$$
Z = \cos(\alpha)\hat{\partial}_y + \sin(\alpha)\hat{\partial}_z
$$
and compute the second fundamental form of $Z^\perp$ in the basis 
\[
\partial_t, \quad \partial_x, \quad -\sin(\alpha)\hat{\partial}_y + \cos(\alpha)\hat{\partial}_z.
\]
We remark that for some choices of $\alpha$ the distribution $Z^\perp$ is not integrable, which is why the matrix 
$$
II = \begin{pmatrix}
0& 0& 0 \\
0 & 0 &  - 2 \cos (\alpha) \sin (\alpha)\\
  - \partial_t \alpha & - \partial_x \alpha & \frac{x^2 \cos (\alpha)
  \partial_z \alpha- \sin (\alpha) \partial_y \alpha}{x^{3/2}}
\end{pmatrix}
$$
is not always symmetric.
If $Z^\perp$ is umbilical for some choice of $\alpha$, then the above matrix must be a multiple of the identity, and hence it would vanish identically. In particular, $- 2 \cos (\alpha) \sin (\alpha)$ is zero, and since $\alpha$ is continuous, we only have two choices: $Z=\hat{\partial}_y$ and $Z= \hat{\partial}_z$.
\end{proof}

\begin{rem}
The above example, shows how even if the analysis of the Weyl tensor yields an infinite  number of candidates to be one dimensional   conformal factors, a  use of Theorem~\ref{prop: characterization of a conformal product factor}  allows to rule out the fake ones.  Notice that in this example, it happened that for the false candidates, $Z^{\perp}$ was not umbilical.  In the rare event (we do not know of any example) that for false candidates
$Z^{\perp}$ was umbilical, the conditions on $H_1+H_2$ being a gradient field would rule out those eigenflags directions not arising from LCW. 
\end{rem}

\end{document}